\theoremstyle{plain}%default
\newtheorem{thm}{Theorem}[section]
\newtheorem{prop}[thm]{Proposition}
\newtheorem{lemma}[thm]{Lemma}
\newtheorem{cor}[thm]{Corollary}
\newtheorem*{thm*}{Theorem}
\newtheorem*{prop*}{Proposition}
\theoremstyle{definition}
\newtheorem{defn}[thm]{Definition}
\theoremstyle{remark}
\newtheorem{remark}{Remark}
\newcommand{\Z}{\mathbb Z}
\newcommand{\Q}{\mathbb Q}
\newcommand\rar[1]{\stackrel{#1}{\longrightarrow}}
\newcommand{\op}{\operatorname}
\newcommand\F{\mathbb F}
\newcommand\sub{\subseteq}
\newcommand\EXT{\op{\mathcal{X}}}
\newcommand{\vv}{\,\vert\,}
\newcommand{\be}{\bold E}
\newcommand{\id}{\text{id}}
\title{Automorphisms and Cohomology}
\author{James A. Schafer}
\date{\today}
\address{\begin{flushleft}\quad Department of Mathematics \\
\quad University of Maryland \\
\quad College Park, Maryland 20742
\end{flushleft}}
\email{jas@math.umd.edu}
\begin{document}
\maketitle
\section{Introduction}

Let $1\to H\to G\to Q\to 1$ be an exact sequence of groups.  In \cite{O} the following exact sequence was developed for centric extensions, i.e the centralizer of $H$ in $G$ is contained in $H$,

$$ 0\to H^1(Q,zH)\rar{\mu}\op{Aut}(G,H)\rar{res}N_{\op{Out}H}(\Phi Q)/\Phi Q\rar{\lambda_\be}H^2(Q,zH)$$
where $\op{Aut}(G,H)$ are the automorphisms of $G$ which restrict to an automorphism of $H$, $\Phi:Q\to \op{Out} H$ is the outer action determined by the extension, $zH$ is the center of $H$ with $Q$-action coming form $\Phi$ and $N_{\op{Out} H}$ the normalizer.  

It is the aim of this paper to generalize the above sequence to arbitrary extensions, show how the above result is derived from the general exact sequence and derive other consequences of the general result including determing solvability of $\op{Aut}(G,H)$.

\section{Extending automorphisms}

Let $\bold E: 1\to H\rar{i} G\rar{\pi} Q\to 1$ be an exact sequence of groups.  Let $\alpha:H\to H$ and $\beta:Q\to Q$ be homomorphisms.  We are interested in the problem of deciding if there exists a homomorphism $\gamma:G\to G$ such that the diagram
$$\begin{CD}\bold E:\quad 1 @>>> H @>i>> G @>\pi>> Q @>>> 1 \\
@.      @V\alpha VV	@V\gamma VV  @V\beta VV   @.\\
\bold E:\quad 1 @>>> H @>i>> G @>\pi>> Q @>>> 1  \end{CD}$$ commutes.

The extension $\bold E$ gives rise to a homomorphism $\Phi:Q\to \op{Out}(H)$ where $\Phi(q)$ is the class of conjugation in $G$ of any preimage of $q$ and this determines a well defined action of $Q$ on $zH$ the center of $H$.  We will denote the image of $\varphi\in\op{Aut}(H)$ in $\op{Out}(H)$ by $[\varphi]$.  In any calculations we will write the group structure aditively in $H$ and $G$ and multiplicatively in $Q$.

\begin{prop} Suppose $\alpha\in\op{Aut}(H)$ then if $\gamma$ exists the following commutes.
$$\begin{CD} Q @>\Phi>> \op{Out}(H)  \\
	@V\beta VV		@VV c_{[\alpha]} V\\
	Q @>\Phi >> \op{Out}(H)  \end{CD}$$
where $c_{[\alpha]}$ denotes conjugation by $[\alpha]$ mapping $\op{Out}(H)\to \op{Out}(H')$.\end{prop}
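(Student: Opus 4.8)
The plan is to unwind the definition of the outer action $\Phi$ and push a chosen preimage through $\gamma$, reading off the two commutativity relations supplied by the diagram. Fix $q\in Q$ and choose $g\in G$ with $\pi(g)=q$. By definition $\Phi(q)=[\phi_g]$, where $\phi_g\in\op{Aut}(H)$ is the automorphism obtained by restricting conjugation by $g$ to $i(H)$ and transporting it back along $i$; that is, $i(\phi_g(h))=g+i(h)-g$ for all $h\in H$. The key first observation is that $g':=\gamma(g)$ is a legitimate preimage of $\beta(q)$: indeed $\pi(\gamma(g))=\beta(\pi(g))=\beta(q)$ by the right-hand square. Since $\Phi$ is well defined independently of the chosen preimage, I may compute $\Phi(\beta(q))$ using precisely this $g'$, so that $\Phi(\beta(q))=[\phi_{g'}]$ with $i(\phi_{g'}(h))=g'+i(h)-g'$.

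Next I would apply $\gamma$ to the conjugation formula for $\phi_g$. Using that $\gamma$ is a homomorphism and that $\gamma\circ i=i\circ\alpha$ from the left-hand square, I compute
$$\gamma\big(g+i(h)-g\big)=\gamma(g)+\gamma(i(h))-\gamma(g)=g'+i(\alpha(h))-g',$$
while the same element equals $\gamma(i(\phi_g(h)))=i(\alpha(\phi_g(h)))$. Comparing the two expressions gives $i(\alpha(\phi_g(h)))=g'+i(\alpha(h))-g'=i(\phi_{g'}(\alpha(h)))$, and since $i$ is injective this yields the identity $\alpha\circ\phi_g=\phi_{g'}\circ\alpha$ in $\op{Aut}(H)$.

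Finally, because $\alpha\in\op{Aut}(H)$ is invertible, the last identity rewrites as $\phi_{g'}=\alpha\circ\phi_g\circ\alpha^{-1}$. Passing to $\op{Out}(H)$ and recalling that $c_{[\alpha]}$ is conjugation by $[\alpha]$, this becomes $[\phi_{g'}]=[\alpha]\,[\phi_g]\,[\alpha]^{-1}=c_{[\alpha]}\big([\phi_g]\big)$, i.e. $\Phi(\beta(q))=c_{[\alpha]}(\Phi(q))$. As $q$ was arbitrary, this is exactly the asserted commutativity $c_{[\alpha]}\circ\Phi=\Phi\circ\beta$.

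I expect the only genuine subtlety, rather than a real obstacle, to be bookkeeping: keeping the direction of conjugation and of composition consistent throughout, and justifying that the passage to $\op{Out}(H)$ is legitimate. The latter is precisely where the hypothesis $\alpha\in\op{Aut}(H)$ enters, since the invertibility of $\alpha$ is what makes $c_{[\alpha]}$ well defined in the first place and what converts the intertwining relation $\alpha\circ\phi_g=\phi_{g'}\circ\alpha$ into a conjugation. Without it the statement would not even be well posed.
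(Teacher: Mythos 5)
Your proposal is correct and follows essentially the same route as the paper: both arguments choose a preimage of $q$ (the paper via a section $u$, you via an arbitrary lift $g$), use the relation $\gamma\circ i=i\circ\alpha$ to establish $\alpha\circ c_g\circ\alpha^{-1}=c_{\gamma(g)}$ on $H$, and conclude by noting $\gamma(g)$ is a preimage of $\beta(q)$, so that passing to $\op{Out}(H)$ gives $c_{[\alpha]}\Phi(q)=\Phi(\beta(q))$. Your version is somewhat more scrupulous about the embedding $i$ and well-definedness of $\Phi$ on preimages, but the underlying computation is identical.
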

\begin{proof} If $u:Q\to G$ is a section then $c_{[\alpha]}\Phi(q)=[\alpha\circ c_{uq}\circ\alpha^{-1}]$.  If $n\in H$ then $\alpha\circ c_{uq}\circ\alpha^{-1}(n)=\alpha[uq+\alpha^{-1}-uq]$ but $\alpha$ is the restriction of $\gamma$ and so this is $\gamma[uq+\alpha^{-1}-uq]=\gamma(uq)+n-\gamma(uq)=c_{\gamma(uq)}(n)$.  Therefore $c_{[\alpha]}\Phi(q)=[c_{\gamma(uq)}]$.  But $\pi(\gamma(uq))=\beta(\pi(uq)=\beta\,q$ and so $[c_{\gamma(uq)}]=\Phi\circ\beta(q)$.\end{proof}

We shall now assume $\alpha$ as an automorphism and that the diagram
$$\begin{CD} Q @>\Phi>> \op{Out}(H)  \\
	@V\beta VV		@VV c_{[\alpha]} V\\
	Q @>\Phi >> \op{Out}(H)  \end{CD}$$
commutes.  

%Since $\alpha$ is an automorphism it gives an isomorphism $\alpha:zH\to zH$ and the commutativity of the above diagram immediately gives 

%\begin{prop} If ${}_\beta zH$ is $zH$ with the $Q$-action twisted by $\beta$, i.e. $q\cdot n=\beta(q)n$ for $n\in zH$ then $\alpha: zH\to {}_\beta zH$ is a map of $Q$-modules. Hence induces a map $\alpha_*:H^*(Q,\,zH)\to H^*(Q,\,{}_\beta zH)$.\end{prop}

%Note:  $\beta:Q\to Q$ also induces a map $\beta^*:H^*(Q,\,zH)\to H^*(Q,\,{}_\beta zH)$.

\section{Pullbacks and pushouts of extensions}

Let $\op{\mathcal{X}}_\Phi(Q,H)$ equivalence classes of extensions $\bold E: 1\to H\rar{i} G\rar{\pi} Q\to 1$ with associated homomorphism $Q\to\op{Out}(H)$ equal to $\Phi$.

If $\beta:Q'\to Q$ then we have a pullback extension and maps
$$\begin{CD}\beta^*\bold E:\qquad  1 @>>> H @>i'>> X @>\pi'>> Q' @>>> 1 \\
@.      @\vert	@V\gamma VV  @V\beta VV   @.\\
\,\bold E:\qquad 1 @>>> H @>i>> G @>\pi>> Q @>>> 1  \end{CD}$$ where $X$ is the pulback of $\pi$ and $\beta$.

It is quite easy to see that if $\bold E\equiv \bold E'$ then $\beta^*\bold E\equiv \beta^*\bold E'$.

\begin{prop} $\Phi'=\Phi\beta:Q'\to\op{Out}(H)$ and hence $\beta^*:\op{\mathcal{X}}_\Phi(Q,H)\to \op{\mathcal{X}}_{\Phi\beta}(Q,H)$.\end{prop}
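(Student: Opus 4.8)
The plan is to read off the outer action $\Phi'$ of the pullback extension directly from the two commuting squares of the pullback diagram, transporting the relevant conjugation through the map $\gamma:X\to G$. Recall that $\Phi'(q')$ is by definition the class in $\op{Out}(H)$ of conjugation in $X$ by any $\pi'$-preimage of $q'$, where $i'(H)$ is normal in $X$ as the kernel of $\pi'$. First I would fix a section $u':Q'\to X$ of $\pi'$, so that $u'(q')$ is a distinguished preimage of $q'$ and $\Phi'(q')=[c_{u'(q')}]$, with $c_{u'(q')}$ read as an automorphism of $H$ via the identification $i'$.

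The key step is to push this conjugation forward along $\gamma$. Since the left-hand square commutes, $\gamma\circ i'=i$, so $\gamma$ restricts to the identity on $H$ under the identifications $i,i'$, and in particular $\gamma|_{i'(H)}$ is injective. As $\gamma$ is a homomorphism, for each $n\in H$ one has $\gamma(c_{u'(q')}i'(n))=c_{\gamma(u'(q'))}\gamma(i'(n))=c_{\gamma(u'(q'))}i(n)$; injectivity then forces $c_{u'(q')}$ and $c_{\gamma(u'(q'))}$ to agree as automorphisms of $H$ after identification. This is exactly the calculation already carried out in the proof of the first Proposition, specialized to the pullback.

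It then remains to identify $\gamma(u'(q'))$ as a preimage of the correct element of $Q$. Commutativity of the right-hand square gives $\pi(\gamma(u'(q')))=\beta(\pi'(u'(q')))=\beta(q')$, so $\gamma(u'(q'))$ is a $\pi$-preimage of $\beta(q')$ in $G$. By the very definition of $\Phi$ as the class of conjugation in $G$ of a preimage, $\Phi(\beta(q'))=[c_{\gamma(u'(q'))}]$, and combining with the previous paragraph yields $\Phi'(q')=[c_{u'(q')}]=[c_{\gamma(u'(q'))}]=\Phi(\beta(q'))$. Since $q'\in Q'$ is arbitrary, this gives $\Phi'=\Phi\beta$.

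For the second assertion I would combine the first part with the remark already noted that $\beta^*$ respects equivalence of extensions: every class in $\op{\mathcal{X}}_\Phi(Q,H)$ is represented by an extension with outer action $\Phi$, whose pullback by the first part has outer action $\Phi\beta$ and hence lies in $\op{\mathcal{X}}_{\Phi\beta}(Q,H)$; well-definedness on equivalence classes then produces the map $\beta^*:\op{\mathcal{X}}_\Phi(Q,H)\to\op{\mathcal{X}}_{\Phi\beta}(Q,H)$. I expect no serious obstacle here. The only point demanding care is that $\Phi$ and $\Phi'$ are defined through \emph{arbitrary} preimages, so one must observe that the resulting class in $\op{Out}(H)$ is insensitive to the choice (two preimages differ by an element of $H$, which alters $c$ only by an inner automorphism) — this is precisely why the whole statement lives in $\op{Out}(H)$ rather than in $\op{Aut}(H)$.
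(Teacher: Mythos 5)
Your proof is correct, and it takes a cleaner, more uniform route than the paper's. The paper proves $\Phi'=\Phi\beta$ by a case analysis: when $\beta$ is a monomorphism it identifies the pullback with $\pi^{-1}(\beta Q')\sub G$ and restricts a section of $\pi$, so that $\Phi'$ is visibly the restriction of $\Phi$; when $\beta$ is an epimorphism it chooses a set-theoretic section $s:Q\to Q'$ of $\beta$, manufactures a section $u=\gamma\, u'\, s$ of $\pi$, and compares $[c_{u'q'}]$ with $[c_{u(\beta q')}]$; the general case is implicitly left to an epi--mono factorization of $\beta$ (compatible with the later fact $(\beta'\beta)^*\be\equiv\beta^*(\beta'^*\be)$), a reduction the paper never spells out. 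You instead give a single computation valid for arbitrary $\beta$: transport $c_{u'(q')}$ along $\gamma$ using $\gamma i'=i$ (the same calculation as in the paper's first proposition, in Section 2), note that $\pi\gamma u'(q')=\beta(\pi' u'(q'))=\beta q'$ makes $\gamma(u'(q'))$ a legitimate $\pi$-preimage of $\beta q'$, and invoke the fact that $\Phi$ is defined via an \emph{arbitrary} preimage, well defined modulo inner automorphisms --- a point you rightly flag, since two preimages differ by an element of $i(H)$. In effect you isolate the closing lines of the paper's epimorphism case (that $\gamma(u'q')$ is a preimage of $\beta q'$, and that $\gamma\vert H=\id$ forces $c_{\gamma(u'q')}=c_{u'q'}$ on $H$) and observe that they already prove the full statement, rendering both the case split and the auxiliary section of $\beta$ unnecessary; what your version buys is uniformity and an explicit treatment of general $\beta$, while the paper's case split buys only the transparency of the monomorphism case. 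Your handling of the second assertion --- citing the remark, established just before the proposition, that $\be\equiv\be'$ implies $\beta^*\be\equiv\beta^*\be'$ --- is exactly the paper's.
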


\begin{proof} First assume $\beta$ is a monomorphism. If $u:Q\to G$ is a section, then $u'=u|Q':Q'\to G'\sub G$ is a section and $\Phi'(q')=[c_{u'(q')}]=[c_{u(q')}]=\Phi(q')$.  If $\beta$ is an epimorphism and $u':Q'\to X$ is a section (all sections have $u'(1)=0$) then define $u:Q\to G$ as follows.  Choose a normalized section $s:Q\to Q'$ and define $u(q)=\gamma\circ u'(sq)$.  Note $\pi\,u(q)=\beta\,\pi'\,u'(s\,q)=q$.   
For $q'\in Q'$, $\Phi'(q')=[c_{u'q'}]$ while $\Phi\beta(q')=[c_{u(\beta q')}]=[c_{\gamma(u's\beta q)}]$.  If $\gamma(u'q')$ and $\gamma(u's\beta q )$ are both preimages of $\beta q'$ and so $[c_{\gamma(u'q')}]=[c_{\gamma(u'\bar q)}]$.  But since $\gamma\vert H=\text{Id}$, $c_{\gamma(u'q')}=c_{u'q'}:H\to H$ for all $q'\in Q'$.\end{proof}

If $\bold E: 1\to H\rar{i} G\rar{\pi} Q\to 1$ with homomorphism $\Phi:Q\to\op{Out}(H)$ then by choosing a (normalized) section $u:Q\to H$, define $\varphi(q)=c_{uq}$ and $f:Q\times Q\to H$ by
$$  u(q)+u(q')=f(q,q')+u(qq')  $$ one sees
\begin{enumerate}\item $[\varphi(q)]=\Phi(q)$
\item $\varphi(q)\varphi(q')=c_{f(q,q')}\varphi(qq')$,
\item  $\varphi(q)f(q',q'')+f(q,q'q'')= f(q,q')+ f(qq',q'')$,\end{enumerate}
and that the extension $\bold E$ is equivalent to $\bold E_{f,\varphi}:\,1\to H\rar{i} E_{f,\varphi}\rar{\pi} Q\to 1$ where $E_{f,\varphi}=H\times Q$ with multiplication $(n,q)+(n',q')=(n+\varphi(q)n'+f(q,q'),qq')$ and obvious maps $i$ and $\pi$ (and section).

\begin{prop} If $\bold E\equiv\bold E_{f,\varphi}$ then $\beta^*\bold E\equiv\bold E_{f^\beta,\varphi^\beta}=\bold E_{f(\beta\times\beta),\varphi\beta}$.\end{prop}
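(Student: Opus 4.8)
The plan is to reduce at once to the case $\bold E=\bold E_{f,\varphi}$. It was already noted that pullback respects equivalence, so $\bold E\equiv\bold E_{f,\varphi}$ gives $\beta^*\bold E\equiv\beta^*\bold E_{f,\varphi}$; hence it suffices to identify $\beta^*\bold E_{f,\varphi}$ with $\bold E_{f^\beta,\varphi^\beta}$. I therefore take $G=E_{f,\varphi}=H\times Q$ with the twisted law $(n,q)+(n',q')=(n+\varphi(q)n'+f(q,q'),qq')$ and form the pullback $X=\{(g,q')\in G\times Q':\pi(g)=\beta(q')\}$, with $\gamma$ and $\pi'$ the two projections.

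The key step is to write $X$ down concretely. Since $\pi(n,q)=q$, every element of $X$ has the form $((n,\beta q'),q')$, so $\theta:H\times Q'\to X$, $\theta(n,q')=((n,\beta q'),q')$, is a bijection. The group law on $X$ is inherited componentwise from $G\times Q'$, and as $\beta$ is a homomorphism we have $\beta q'_1\cdot\beta q'_2=\beta(q'_1q'_2)$; computing the first coordinate via the law on $E_{f,\varphi}$ then gives $n_1+\varphi(\beta q'_1)n_2+f(\beta q'_1,\beta q'_2)$. Reading this back through $\theta^{-1}$ yields exactly $(n_1,q'_1)+(n_2,q'_2)=(n_1+\varphi^\beta(q'_1)n_2+f^\beta(q'_1,q'_2),q'_1q'_2)$, which is the defining law of $E_{f^\beta,\varphi^\beta}$; thus $\theta$ is an isomorphism of groups. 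Along the way one should note that $\varphi^\beta=\varphi\beta$ and $f^\beta=f(\beta\times\beta)$ satisfy conditions (1)--(3), but this is immediate upon precomposing those identities with $\beta$, so $\bold E_{f^\beta,\varphi^\beta}$ is a bona fide extension.

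It remains to check that $\theta$ is an equivalence of extensions, i.e.\ that it is compatible with the two ends of the sequence: $\theta(n,1)=((n,1),1)$ matches $i'$, and $\pi'\theta(n,q')=q'$ matches the projection to $Q'$, so $\theta$ commutes with $i'$ and $\pi'$. There is no genuine obstacle here; the only point demanding care is the bookkeeping of the pullback's two projections and the observation that membership in $X$ forces the $Q$-coordinate of the first factor to be $\beta q'$, which is precisely what makes $\theta$ well defined and surjective.
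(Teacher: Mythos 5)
Your proof is correct and takes essentially the same route as the paper: the paper chooses a normalized section $u$ of $\bold E$ and shows that $u'(q')=(u(\beta q'),q')$ is a normalized section of the pullback with conjugation data $\varphi\beta$ and factor set $f(\beta\times\beta)$, which is precisely the content of your isomorphism $\theta$ once $\bold E$ is replaced by the model $E_{f,\varphi}$. Your extra steps --- invoking the already-noted invariance of $\beta^*$ under equivalence to reduce to the model, and verifying that $\theta$ commutes with $i'$ and $\pi'$ --- merely make explicit what the paper leaves to the standard correspondence between section data and the extension $\bold E_{f,\varphi}$.
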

\begin{proof} If $u:Q\to G$ is a normalized section for $\bold E$ such that $\varphi(q)=c_{u(q)}$, define $u':Q\to X$ by $u'(q)=(u(\beta q),q)\in X\sub G\times Q$. $u'$ is a normalized section and $c_{u'q}(n,1)=(\varphi(\beta q)n,1)$ and $u'q+u'q'-u'(qq')= (f(\beta q,\beta q'),1)$.

%$\beta$ an isomorphism.  If $u:Q\to G$ is a normalized section for $\bold E$ such that  $\varphi(q)=c_{u(q)}$, define $u':Q\to X$ by $u'(q)=\gamma^{-1}u(\beta q)$. Then since $\gamma\vert H=\text{Id}$, $c_{u'q}=c_{u(\beta q)}=\varphi(\beta q)\in\op{Aut}(H)$ and 
%$$f'(q,q')=u'(q)+u'(q')-u'(qq')=\gamma^{-1}(u(\beta q)+u(\beta q')-u(\beta qq'))=f(\beta q,\beta q').$$
\end{proof} 

%then we may choose a section of $u':Q\to X$ of $\beta^*\bold E$  That $\varphi^\beta=\varphi\circ\beta$ is immediate from the proof of the previous proposition and it then follows that $f^\beta=f(\beta\times\beta)$.\end{proof}

We would like the notion of pushout corresponding to pullback but these do not exist in general in the category of groups.

\medskip Now suppose $\alpha:H\to H$ is an automorphism and $\bold E\equiv\bold E_{f,\varphi}$. Define $\alpha_*\bold E_{f,\varphi}=\bold E_{f_{\alpha},\varphi_{\alpha}}$ where $f_\alpha=\alpha f$ and $\varphi_\alpha=c_{\alpha}\varphi$, that is $\varphi_\alpha(q)=\alpha\varphi(q)\alpha^{-1}$.  It is easily seen that $f_\alpha$ and $\varphi_\alpha$ satisfy (2) and (3) and that $\Phi_\alpha=c_{[\alpha]}\Phi:Q\to\op{Out}(H)$.  In order to show we may define $\alpha_*\bold E$ as $\alpha_*\bold E_{f,\varphi}$ we need to show that if $\bold E_{f,\varphi}\equiv\bold E_{f',\varphi'}$ then $\bold E_{f_\alpha,\varphi_\alpha}\equiv\bold E_{f'_\alpha,\varphi'_\alpha}$.  
 
We will use the following convention.  If $\bold E: 0\to H\rar i G\rar \pi Q\to 1$ and $\bold E':0\to H'\rar{i'} G'\rar{\pi'} Q'\to 1$ are extensions and $\alpha:H\to H'$, $\beta:Q\to Q'$, we will write $(\alpha,\beta):\bold E\to\bold E'$ if there exists $\gamma:G\to G'$ such that $\gamma i=i'\alpha$ and $\pi\beta=\pi'\gamma$.  By abuse of notation we will denote the set of such $\gamma$ by $(\alpha,\beta)$.

The following lemma will prove very useful.

\begin{prop}\label{map} Suppose $$\begin{CD} 1 @>>> H @>i>> E_{f,\varphi} @>\pi>> Q @>>> 1 \\
@.      @V\alpha VV	@.  @V\beta VV   @.\\
 1 @>>> H' @>i>> E_{f',\varphi'} @>\pi>> Q' @>>> 1.\end{CD}$$
Then there exists $\gamma:E_{f,\varphi}\to E_{f',\varphi'}$ with $\gamma i=i\alpha$ and $\pi\gamma=\beta\pi$ if and only if there exists $\sigma:Q\to H'$ such that for all $h\in H\, q,q'\in Q$
\begin{equation} \sigma q+\varphi'(\beta q)[\alpha h+\sigma q']+f'(\beta q,\beta q')=\alpha[\varphi(q)h]+\alpha f(q,q')+\sigma(qq').\end{equation} 

(a) The correspondence between $\{\gamma\vv \gamma\,i=i'\alpha, \gamma\pi=\pi'\beta\}$ and $\{\sigma\vv \sigma satisfies (1)\}$ is a bijection.
\par (b) If $\gamma$ corresponds to $\sigma$ and $\gamma'$ corresponds to $\sigma'$, then $\gamma'\gamma:E_{f,\varphi}\to E_{f'',\varphi''}$ corresponds to $\alpha'\sigma+\sigma'\beta:Q\to H''$.
\end{prop}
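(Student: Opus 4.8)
The plan is to use that, as sets, $E_{f,\varphi}=H\times Q$ and $E_{f',\varphi'}=H'\times Q'$, so that any $\gamma$ filling the square is determined by a single function $\sigma\colon Q\to H'$. First I would pin down the shape of such a $\gamma$. The requirement that the $Q'$-coordinate of $\gamma$ be $\beta\pi$ forces $\gamma(n,q)=(g(n,q),\beta q)$ for some $g\colon H\times Q\to H'$, and the requirement $\gamma i=i'\alpha$ forces $g(h,1)=\alpha h$. Define $\sigma$ by $\sigma q=g(0,q)$. Assuming $\gamma$ is a homomorphism, evaluating it on the product $(n,1)+(0,q)=(n,q)$ (using the normalization $\varphi(1)=\id$, $f(1,q)=f(q,1)=0$) gives $\gamma(n,q)=(\alpha n+\sigma q,\beta q)$. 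Thus every homomorphism filling the square has this normal form.

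Next I would substitute this normal form into the homomorphism identity on a general product. Computing $\gamma\big((n,q)+(n',q')\big)$ from the multiplication in $E_{f,\varphi}$ and $\gamma(n,q)+\gamma(n',q')$ from that in $E_{f',\varphi'}$ and equating first coordinates yields
\begin{equation*}
\alpha n+\alpha\varphi(q)n'+\alpha f(q,q')+\sigma(qq')=\alpha n+\sigma q+\varphi'(\beta q)\big[\alpha n'+\sigma q'\big]+f'(\beta q,\beta q').
\end{equation*}
Left-cancelling the leading $\alpha n$ and writing $h$ for $n'$ gives precisely equation (1). Conversely, for any $\sigma$ satisfying (1) the formula $\gamma(n,q)=(\alpha n+\sigma q,\beta q)$ is a homomorphism making the square commute. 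Since $\gamma$ and $\sigma$ determine each other through this formula, the assignment $\gamma\mapsto\sigma$ is the asserted bijection, which proves the equivalence and part (a).

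Finally, for (b) I would compose the two normal forms. With $\gamma(n,q)=(\alpha n+\sigma q,\beta q)$ and $\gamma'(n',q')=(\alpha'n'+\sigma'q',\beta'q')$,
\begin{equation*}
\gamma'\gamma(n,q)=\gamma'(\alpha n+\sigma q,\beta q)=\big(\alpha'\alpha\,n+\alpha'\sigma q+\sigma'\beta q,\ \beta'\beta\,q\big),
\end{equation*}
so $\gamma'\gamma$ again has the normal form, with underlying automorphism $\alpha'\alpha$ and section-part $q\mapsto\alpha'\sigma q+\sigma'\beta q$; that is, it corresponds to $\alpha'\sigma+\sigma'\beta$, as claimed.

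The calculations are routine; the only point demanding care is that $H'$ and $H''$ need not be abelian, so one must respect the order of the summands and make sure every cancellation removes a term from the correct side. The term $\alpha n$ lies at the extreme left on both sides of the displayed homomorphism identity, so the single cancellation required is legitimate, and no other is needed. Beyond this bookkeeping I anticipate no genuine obstacle.
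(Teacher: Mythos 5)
Your argument is correct and is essentially the paper's own proof written out in full: the paper likewise notes that $\gamma(h,q)=(\alpha h+\sigma q,\beta q)$ is forced by $\gamma i=i'\alpha$ and $\pi'\gamma=\beta\pi$, that equation (1) is precisely the condition that $\gamma$ be a homomorphism, and that (a) and (b) follow from this normal form. You simply make explicit the computations the paper leaves implicit (the derivation of the normal form via the normalization $\varphi(1)=\id$, $f(1,q)=f(q,1)=0$, the left-cancellation of $\alpha n$, and the composition formula for (b)), all of which check out.
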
 
\begin{proof} That $\gamma(h,q)=(\alpha h+\sigma q,\beta q)$ is forced by $\gamma i=i\alpha$ and $\pi\gamma=\beta\pi$.
Condition (1) is just the fact that $\gamma$ is to be a homomorphism.  (a) and (b) are immediate from the relationship between $\gamma$ and $\sigma$.\end{proof}

%\begin{cor} $\bold E_{f,\varphi}\equiv\bold E_{f',\varphi'}$ if and only if there exists $\lambda:Q\to H$ with $\lambda(1)=0$ and
%\begin{itemize}\item[a)] $\varphi(q)=c_{\lambda\,q}\varphi'(q)$,
%\item[b)] $\lambda(q)+\varphi'(q)\cdot\lambda(q')+f'(q,q')=f(q,q')+\lambda(qq').$\end{itemize}\end{cor}
%\begin{proof} $\bold E_{f,\varphi}\equiv\bold E_{f',\varphi'}$ if and only if there exists there exist a map $(id,id):\bold E_{f,\varphi}\to\bold E_{f',\varphi'}$ if and only if there exists $\lambda:Q\to H$, $\lambda(1)=0$ and 
%%If the map exists, letting $q'=1$ we see $\varphi(c)=c_{\lambda q}\varphi'(q)$.  Substituting this into $(*)$ gives b).  Conversely a) and b) imply $(*)$ and so the map $(id,id)$ exists.\end{proof}
\begin{prop} (a) There exists $(\alpha,\id):\be_{f,\varphi}\to\alpha_*\be_{f,\varphi}$.
\par (b) If $(\alpha,\beta):\be_{f,\varphi}\to\be_{f',\varphi'}$ then there exists a unique $(\id,\beta):\alpha_*\be_{f,\varphi}\to\be_{f',\varphi'}$ such that 
$$(\alpha,\beta)=(\id,\beta)(\alpha,\id):\be_{f,\varphi}\to\alpha_*\be_{f,\varphi}\to\be_{f'\varphi'}.$$\end{prop}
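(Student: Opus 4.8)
The plan is to deduce both parts directly from Proposition~\ref{map}, producing each arrow through its associated section $\sigma$ and reading off composites from Proposition~\ref{map}(b).

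For (a), I would apply Proposition~\ref{map} to the pair $\be_{f,\varphi}$ and $\alpha_*\be_{f,\varphi}=\be_{f_\alpha,\varphi_\alpha}$, with $H$-map $\alpha$ and $Q$-map $\id$, and test $\sigma=0$. Equation~(1) then collapses to $\varphi_\alpha(q)[\alpha h]+f_\alpha(q,q')=\alpha[\varphi(q)h]+\alpha f(q,q')$. Since $\varphi_\alpha(q)=\alpha\varphi(q)\alpha^{-1}$ gives $\varphi_\alpha(q)[\alpha h]=\alpha\varphi(q)h=\alpha[\varphi(q)h]$, and since $f_\alpha=\alpha f$ by definition, the two sides coincide identically. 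Hence $\sigma=0$ is a solution and the corresponding map is the canonical $\gamma_0(h,q)=(\alpha h,q)$, establishing $(\alpha,\id)\colon\be_{f,\varphi}\to\alpha_*\be_{f,\varphi}$.

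For (b), the key observation is that because $\alpha$ is an automorphism, $\gamma_0$ is a \emph{bijective} homomorphism, hence an isomorphism $\be_{f,\varphi}\rar{\sim}\alpha_*\be_{f,\varphi}$ (its set-inverse $(h,q)\mapsto(\alpha^{-1}h,q)$ is automatically a homomorphism, and indeed corresponds to $\sigma=0$ with $H$-map $\alpha^{-1}$). I would therefore \emph{define} the desired arrow by $(\id,\beta)=(\alpha,\beta)\circ\gamma_0^{-1}$. This is manifestly of type $(\id,\beta)\colon\alpha_*\be_{f,\varphi}\to\be_{f',\varphi'}$ and satisfies $(\id,\beta)(\alpha,\id)=(\alpha,\beta)$ by construction; uniqueness is immediate, since $\gamma_0$ is invertible forces $(\id,\beta)=(\alpha,\beta)\circ\gamma_0^{-1}$. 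To record this at the level of sections, let $\sigma$ be the section witnessing $(\alpha,\beta)$; I would verify that the \emph{same} $\sigma$ witnesses $(\id,\beta)$ by writing equation~(1) for $(\id,\beta)$ with source data $(f_\alpha,\varphi_\alpha)$ and $H$-map $\id$, namely $\sigma q+\varphi'(\beta q)[h+\sigma q']+f'(\beta q,\beta q')=\varphi_\alpha(q)h+f_\alpha(q,q')+\sigma(qq')$, and comparing it with equation~(1) for $(\alpha,\beta)$ after the substitution $h\mapsto\alpha^{-1}h$. Inserting $\varphi_\alpha=c_\alpha\varphi$ and $f_\alpha=\alpha f$ makes the two equations literally identical, and since $\alpha^{-1}$ permutes $H$, validity for all $h$ transfers. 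Proposition~\ref{map}(b) then computes the composite $(\id,\beta)(\alpha,\id)$ as corresponding to $\id\cdot 0+\sigma\cdot\id=\sigma$, confirming that it equals $(\alpha,\beta)$.

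I expect the only genuine obstacle to be the bookkeeping: tracking which quadruple (factor set, automorphism datum, $H$-map, $Q$-map) feeds into equation~(1) for each of the three arrows, and recognizing that the change of variable $h\mapsto\alpha^{-1}h$ is exactly what identifies the instance of~(1) for $(\alpha,\beta)$ with that for $(\id,\beta)$. Once this is set up correctly, everything reduces to a direct substitution from the definitions $f_\alpha=\alpha f$ and $\varphi_\alpha=\alpha\varphi\alpha^{-1}$.
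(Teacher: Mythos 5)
Your proposal is correct and takes essentially the same route as the paper: part (a) is the paper's $\sigma\equiv 0$ computation verbatim, and in part (b) you, like the paper, reuse the very same $\sigma$ after the change of variable $h'=\alpha h$ (so that $\alpha[\varphi(q)h]+\alpha f(q,q')$ becomes $\varphi_\alpha(q)h'+f_\alpha(q,q')$) and then invoke Proposition~\ref{map}(b) to see that the composite corresponds to $\alpha\cdot 0+\sigma\cdot\id=\sigma$. Your extra remark that $\gamma_0$ is invertible, forcing $(\id,\beta)=(\alpha,\beta)\circ\gamma_0^{-1}$, merely makes explicit the uniqueness that the paper leaves implicit in the correspondence of Proposition~\ref{map}.
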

\begin{proof} (a) $\sigma\equiv 0$ defines a $\be_{f,\varphi}\to\alpha_*\be_{f,\varphi}$ since $\varphi_\alpha(\alpha h)=\alpha\varphi(h)$.
\par (b). Suppose $\gamma\in(\alpha,\beta)$ corresponds to the map $\sigma:Q\to H'$, that is
$$ \sigma q+\varphi'(\beta q)[\alpha h+\sigma q']+f'(\beta q,\beta q')=\alpha[\varphi(q)h]+\alpha f(q,q')+\sigma(qq').$$
But the right hand side of the equation is $\varphi_\alpha(\alpha h)+f_\alpha(q,q')+\sigma(qq')$ and hence letting $h'=\alpha h$, $\sigma$ defines a map $(\id,\beta):\alpha_*\be_{f,\varphi}\to\be_{f',\varphi'}$.  By (b) of the previous proposition $(\id,\beta)(\alpha,\id)$ corresponds to $\alpha\,0+\sigma\,\id=\sigma$ and so $(\alpha,\beta)=(\id,\beta)(\alpha,\id)$.\end{proof}

\begin{cor} If $\bold E_{f,\varphi}\equiv\bold E_{f',\varphi'}$ then $\alpha_*\bold E_{f,\varphi}\equiv\alpha_*\bold E_{f',\varphi'}$.  It follows that $\alpha_*\bold E$ is well defined.\end{cor}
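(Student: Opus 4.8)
The plan is to exhibit an explicit morphism of extensions $(\id,\id)\colon \alpha_*\be_{f,\varphi}\to\alpha_*\be_{f',\varphi'}$. Since any morphism of short exact sequences which is the identity on $H$ and covers the identity on $Q$ is automatically an isomorphism (short five lemma), and is therefore an equivalence, producing such a morphism will establish $\alpha_*\be_{f,\varphi}\equiv\alpha_*\be_{f',\varphi'}$. The whole construction can be assembled from the two preceding propositions, so no factor-set manipulation is needed; the convention is that pairs compose componentwise, $(\alpha_2,\beta_2)(\alpha_1,\beta_1)=(\alpha_2\alpha_1,\beta_2\beta_1)$, which is compatible with the composition law recorded in Proposition \ref{map}(b).

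The steps, in order, are as follows. First I would unwind the hypothesis: $\be_{f,\varphi}\equiv\be_{f',\varphi'}$ means precisely that there is a morphism $(\id,\id)\colon\be_{f,\varphi}\to\be_{f',\varphi'}$. Next, applying part (a) of the preceding proposition to the extension $\be_{f',\varphi'}$ gives the universal map $(\alpha,\id)\colon\be_{f',\varphi'}\to\alpha_*\be_{f',\varphi'}$. Composing these two yields a morphism whose components are $(\alpha\circ\id,\id\circ\id)=(\alpha,\id)\colon\be_{f,\varphi}\to\alpha_*\be_{f',\varphi'}$. Now I would invoke part (b) of the preceding proposition, taking the target extension to be $\alpha_*\be_{f',\varphi'}$ and $\beta=\id$: from the morphism $(\alpha,\id)\colon\be_{f,\varphi}\to\alpha_*\be_{f',\varphi'}$ it produces a unique morphism $(\id,\id)\colon\alpha_*\be_{f,\varphi}\to\alpha_*\be_{f',\varphi'}$ factoring it through the universal map $(\alpha,\id)\colon\be_{f,\varphi}\to\alpha_*\be_{f,\varphi}$. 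As noted above, this $(\id,\id)$ is an equivalence, so $\alpha_*\be_{f,\varphi}\equiv\alpha_*\be_{f',\varphi'}$. The final assertion is then formal: one defines $\alpha_*\be$ on an equivalence class by choosing any representative $\be_{f,\varphi}$ and setting $\alpha_*\be=[\alpha_*\be_{f,\varphi}]$, and independence of the representative is exactly what has just been proved.

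I do not expect a serious obstacle, since the argument is purely formal once parts (a) and (b) are in hand; the one point requiring genuine care is the bookkeeping—verifying that the pairs $(\alpha,\beta)$ compose componentwise and, above all, that part (b) is applied with the correct target $\alpha_*\be_{f',\varphi'}$ and with $\beta=\id$, so that the factoring morphism truly carries the labels $(\id,\id)$ and not some other pair. If one preferred a hands-on alternative, one could instead let $\sigma\colon Q\to H$ realize the original equivalence and check directly, via equation (1) of Proposition \ref{map}, that the very same $\sigma$ solves the equation relating $(f_\alpha,\varphi_\alpha)$ to $(f'_\alpha,\varphi'_\alpha)$; but routing the proof through (a) and (b) avoids this computation entirely.
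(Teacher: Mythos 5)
Your proposal is correct and follows essentially the same route as the paper: both factor the composite $(\alpha,\id)(\id,\id)\colon\be_{f,\varphi}\to\alpha_*\be_{f',\varphi'}$ through the universal map $(\alpha,\id)\colon\be_{f,\varphi}\to\alpha_*\be_{f,\varphi}$ using parts (a) and (b) of the preceding proposition, and then observe that the induced morphism has identity components, hence is an equivalence. The only cosmetic difference is that you cite the short five lemma explicitly where the paper leaves the conclusion implicit.
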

\begin{proof} If $(\id,\id):\be_{f,\varphi}\to\be{f',\varphi'}$ is an equivalence then the map $(\alpha,\id)(\id,\id):\be_{f,\varphi}\to \alpha_*\be_{f'\varphi'}$ factors through $\alpha_*\be_{f,\varphi}$.  That is there exists $(\rho,\tau):\alpha_*\be_{f,\varphi}\to\alpha_*\be_{f'\varphi'}$ with
$(\rho,\tau)(\alpha,\id)=(\alpha,\id)(\id,\id):\be_{f,\varphi}\to\alpha_*\be_{f'\varphi'}$.  It follows $\rho$ and $\tau$ are the identities and define an equivalence of $\alpha_*\be_{f,\varphi}$ and $\alpha_*\be_{f'\varphi'}$.
\end{proof}

\begin{cor} Let $\alpha:H\to H$ be an isomorphism.
\begin{enumerate}\item There exists $(\alpha,1):\bold E\to \alpha_*\bold E$.
\item If $(\alpha,\beta):\be\to\be'$ then there exists $(\text{id},\beta):\alpha_*\be\to\be'$ such that $$(\alpha,\beta)=(\text{id},\beta)(\alpha,\text{id}):\be\to\alpha_*E\to\be'.$$
That is, $\alpha_*\be$ is a pushout in the category of extensions.
\item If there exists $(\alpha,1):\bold E\to \bold E'$, then $\bold E'=\alpha_*\bold E$.\end{enumerate}\end{cor}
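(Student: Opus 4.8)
The plan is to deduce all three parts from the preceding proposition (parts (a) and (b)) and the corollary establishing well-definedness, by transporting everything to the standard models $\be_{f,\varphi}$. The two facts I will use repeatedly are: every extension $\be$ is equivalent to some $\be_{f,\varphi}$, and equivalence of extensions is exactly a morphism of the form $(\id,\id)$, which is invertible (so $\equiv$ is symmetric). I also use that morphisms of extensions compose coordinatewise, $(\alpha',\beta')(\alpha,\beta)=(\alpha'\alpha,\beta'\beta)$, and that by the well-definedness corollary the class $\alpha_*\be$ is represented by $\alpha_*\be_{f,\varphi}$ for any $f,\varphi$ with $\be\equiv\be_{f,\varphi}$.

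For (1), fix an equivalence $(\id,\id)\colon\be\to\be_{f,\varphi}$. Part (a) of the preceding proposition supplies $(\alpha,\id)\colon\be_{f,\varphi}\to\alpha_*\be_{f,\varphi}$, and $\alpha_*\be_{f,\varphi}$ represents $\alpha_*\be$. Precomposing the second morphism with the equivalence gives a morphism $\be\to\alpha_*\be$ whose coordinates are $\alpha\circ\id=\alpha$ and $\id\circ\id=1$, i.e. an element of $(\alpha,1)$. For (2), choose equivalences $(\id,\id)\colon\be\to\be_{f,\varphi}$ and $(\id,\id)\colon\be'\to\be_{f',\varphi'}$ and conjugate the given $(\alpha,\beta)\colon\be\to\be'$ by them; since the conjugating maps contribute only identities in each coordinate, this yields a morphism $(\alpha,\beta)\colon\be_{f,\varphi}\to\be_{f',\varphi'}$ of standard models. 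Part (b) now produces a unique $(\id,\beta)\colon\alpha_*\be_{f,\varphi}\to\be_{f',\varphi'}$ with $(\alpha,\beta)=(\id,\beta)(\alpha,\id)$; transporting back along the chosen equivalences gives the factorization through $\alpha_*\be$. Uniqueness of the factoring $(\id,\beta)$ is precisely the bijection between morphisms $\gamma$ and cochains $\sigma$ recorded in Proposition~\ref{map}(a): the factoring morphism is pinned down by its associated $\sigma$, which is forced by the defining equation. This is the universal property of a pushout in the category of extensions.

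For (3), apply (2) to the hypothesized $(\alpha,1)\colon\be\to\be'$, taking $\beta=1$. This produces a comparison morphism $(\id,1)\colon\alpha_*\be\to\be'$. Since this morphism is the identity on both $H$ and $Q$, the short five lemma forces the induced map on the middle groups to be an isomorphism, so $(\id,1)$ is an equivalence of extensions. Hence $\be'\equiv\alpha_*\be$, that is $\be'=\alpha_*\be$ as elements of $\EXT_\Phi(Q,H)$.

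The routine verifications are the coordinatewise composition law and the checking that conjugation by equivalences preserves the two coordinates and the factorization; these are immediate from the relationship between $\gamma$ and $\sigma$ in Proposition~\ref{map}. The one genuinely load-bearing input is the short five lemma used in (3): it is what upgrades the canonical comparison morphism $(\id,1)$ into an honest equality of equivalence classes, and thereby identifies $\alpha_*\be$ uniquely as the pushout rather than merely as a candidate fitting into the diagram. I expect this step, together with keeping the directions of the transporting equivalences straight, to be the only place where care is required.
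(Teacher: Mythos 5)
Your proof is correct and follows exactly the route the paper intends: the paper states this corollary without any proof, treating it as an immediate consequence of the preceding proposition (parts (a) and (b) for the models $\be_{f,\varphi}$) together with the well-definedness corollary, and your write-up is precisely that derivation made explicit---transporting along equivalences $(\id,\id)$, invoking (a), (b), and the $\gamma\leftrightarrow\sigma$ bijection of Proposition~\ref{map} for uniqueness. Your one added ingredient, the short five lemma (valid in the category of groups) to upgrade the comparison morphism $(\id,\id):\alpha_*\be\to\be'$ in part (3) to an equivalence, is exactly the standard fact the paper is silently relying on throughout when it treats maps of extensions that are the identity on $H$ and $Q$ as equivalences.
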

%\begin{proof} (1) The map $\gamma(h,q)=(\alpha h,q)$, i.e. $\sigma(q)=0$ defines a map $\bold E_{f,\varphi}\to\bold E_{f_\alpha,\varphi_\alpha}$ by the above lemma.

%(2) Assume $\be=\be_{f,\varphi}$ and $\be'=\be_{f',\varphi}$.  Then the map $(\alpha,\beta):\be_{f,\varphi}\to\be_{f',\varphi'}$ corresponds to a function $\sigma:Q\to H$ satisfying
%$$ \sigma q+\varphi'(\beta q)[\alpha h+\sigma q']+f'(\beta q,\beta q')=\alpha[\varphi(q)h]+\alpha f(q,q')+\sigma(qq').$$
%But $$\alpha[\varphi(q)h]+\alpha f(q,q')+\sigma(qq')=\varphi_\alpha(q)\alpha h+f_\alpha(q,q')+\sigma(qq').$$
%Since $\alpha$ is an isomorphism $\sigma$ defines a homomorphism $(\text{id},\beta):\alpha_*\be\to\be'$.  The homomophism $(\alpha,\text{id}):\be\to\alpha_*\be$ corresponds to the function $\sigma\equiv 0:Q\to H$.  If $(\alpha,\beta)=h_\sigma:\be_{f,\varphi}\to\be_{f'\varphi'}$ and $(\alpha',\beta')=h_{\sigma'}:\be_{f',\varphi'}\to\be_{f'',\varphi''}$ then $(\alpha'\beta')(\alpha,\beta)=h_{\alpha'\sigma+\sigma'\beta}:\be_{f,\varphi}\to\be_{f'',\varphi''}$.  Hence $(\alpha,\beta)=(\text{id},\beta)(\alpha,\text{id})$.

%(3) This is immediate from (2).
%If $\bold E'=\bold E'_{f''.\varphi''}$ and $\sigma:Q\to H$ defines the map $(\alpha,1):\bold E\to \bold E''$ (i.e. $\gamma(h,q)=(\alpha h+\sigma q,q)$ then $\sigma'q=\alpha^{-1}\sigma q$ defines a map $(1,1):\bold E''\to \alpha_*\bold E$.
%\end{proof}

\begin{prop}\begin{enumerate}\item If $\alpha:H\to H'$ and $\alpha':H'\to H''$ are isomorphisms then $(\alpha'\alpha)_*\bold E\equiv \alpha'_*(\alpha_*\bold E).$
\item If $\beta:Q\to Q'$ and $\beta':Q'\to Q''$ then $(\beta'\beta)^*\bold E\equiv \beta^*(\beta'^*\bold E).$
\item If $\alpha:H\to H'$ is an isomorphism and $\beta:Q\to Q'$ then $\alpha_*\beta^*\bold E\equiv \beta^*\alpha_*\bold E$. \end{enumerate} \end{prop}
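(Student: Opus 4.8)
The plan is to pass to the standard factor-system description and then simply compute, since each of the three assertions reduces to an identity of factor systems once we record how $\alpha_*$ and $\beta^*$ act on a pair $(f,\varphi)$. By the Corollary establishing that $\alpha_*\bold E$ is well defined on equivalence classes, together with the earlier observation that $\bold E\equiv\bold E'$ implies $\beta^*\bold E\equiv\beta^*\bold E'$, both operations descend to $\op{\mathcal X}$; hence it suffices to fix a representative $\bold E\equiv\bold E_{f,\varphi}$ and to compare the canonical representatives produced on each side. The two formulas I will use throughout are $\alpha_*\bold E_{f,\varphi}=\bold E_{\alpha f,\,c_\alpha\varphi}$ (so $\alpha_*$ post-composes the $H$-valued data with $\alpha$ and conjugates $\varphi$) and $\beta^*\bold E_{f,\varphi}=\bold E_{f(\beta\times\beta),\,\varphi\beta}$ (so $\beta^*$ pre-composes the $Q$-argument with $\beta$).

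For (1), starting from $\bold E_{f,\varphi}$ I would first form $\alpha_*\bold E_{f,\varphi}=\bold E_{\alpha f,\,c_\alpha\varphi}$ and then apply $\alpha'_*$ to this canonical representative, obtaining the factor system $\bigl(\alpha'(\alpha f),\,c_{\alpha'}(c_\alpha\varphi)\bigr)$. Using $\alpha'(\alpha f)=(\alpha'\alpha)f$ and the pointwise identity $c_{\alpha'}(c_\alpha\varphi)(q)=\alpha'\alpha\,\varphi(q)\,\alpha^{-1}\alpha'^{-1}=c_{\alpha'\alpha}\varphi(q)$, this is precisely the factor system defining $(\alpha'\alpha)_*\bold E_{f,\varphi}$, so the two sides are literally the same extension $\bold E_{(\alpha'\alpha)f,\,c_{\alpha'\alpha}\varphi}$, a fortiori equivalent. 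For (2), where $\bold E$ lives over the target of $\beta'$ and $\beta^*\beta'^{*}$ is the composite $\op{\mathcal X}(Q'')\to\op{\mathcal X}(Q')\to\op{\mathcal X}(Q)$, applying the pullback formula twice gives $\beta^*(\beta'^{*}\bold E_{f,\varphi})=\bold E_{[f(\beta'\times\beta')](\beta\times\beta),\,(\varphi\beta')\beta}$; since $(\beta'\times\beta')(\beta\times\beta)=(\beta'\beta)\times(\beta'\beta)$ and $(\varphi\beta')\beta=\varphi(\beta'\beta)$, this coincides on the nose with $(\beta'\beta)^{*}\bold E_{f,\varphi}=\bold E_{f((\beta'\beta)\times(\beta'\beta)),\,\varphi(\beta'\beta)}$.

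For (3), the key point is that $\alpha_*$ modifies only the $H$-valued output (post-composition by $\alpha$ together with conjugation of $\varphi$) while $\beta^*$ modifies only the $Q$-valued input (pre-composition by $\beta$), so the two operations act on independent slots and must commute. Concretely, $\alpha_*\beta^*\bold E_{f,\varphi}=\alpha_*\bold E_{f(\beta\times\beta),\,\varphi\beta}=\bold E_{\alpha[f(\beta\times\beta)],\,c_\alpha(\varphi\beta)}$, whereas $\beta^*\alpha_*\bold E_{f,\varphi}=\beta^*\bold E_{\alpha f,\,c_\alpha\varphi}=\bold E_{(\alpha f)(\beta\times\beta),\,(c_\alpha\varphi)\beta}$; since $\alpha[f(\beta\times\beta)]=(\alpha f)(\beta\times\beta)$ and $c_\alpha(\varphi\beta)(q)=\alpha\,\varphi(\beta q)\,\alpha^{-1}=(c_\alpha\varphi)\beta(q)$, the two factor systems again agree, so the extensions are equal.

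The only thing requiring care — and where I would focus the write-up — is the bookkeeping of well-definedness and variance: one must invoke the well-definedness results so that choosing the canonical representative $\bold E_{f,\varphi}$ for the class of $\bold E$ is legitimate at each stage, and one must track the contravariance of $\beta^*$ (so that $\bold E$ is taken to live over the target of the relevant $Q$-map) to be sure the composites are even defined. I expect no genuine obstacle beyond this: once the conventions are pinned down, each of the three identities is a one-line comparison of factor systems, and in fact yields literal equality rather than mere equivalence.
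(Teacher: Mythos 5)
Your proof is correct and follows essentially the same route as the paper, which disposes of all three statements with the remark that they are immediate from the definitions of $\alpha_*\bold E\equiv\alpha_*\bold E_{f,\varphi}$ and $\beta^*\bold E\equiv\bold E_{f(\beta\times\beta),\varphi\beta}$; you have simply written out the factor-system computations (and the well-definedness bookkeeping) that the paper leaves implicit.
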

\begin{proof} These are immediate from the definition of $\alpha_*\bold E\equiv\alpha_*\bold E_{f,\varphi}$ and similarly for $\beta^*$.\end{proof}

%\begin{prop} Let $\bold E: 0\to H\rar i G\rar\pi Q\to 1$ be an extension and $u:Q\to G$ is a normalized section with associated functions $\varphi$, $f$.  Then if $\lambda:Q\to H$ with $\lambda(1)=0$ and $vq=i\lambda q+uq$,
%\item $(h,q)\mapsto (h-\lambda q,q)$ defines and equivalence $\bold E_{f,\varphi}\to \bold E_{f',\varphi'}$.
%\end{enumerate}\end{prop}
%\begin{proof} (1) is a direct calculation and (2) follows from the lemma.\end{proof}

\begin{thm}\label{conj} Suppose $\alpha=c_{\bar g}\vert H:H\to H$ with $\pi(g)\in zQ$, in particular if $g\in H$. Then $\bold E\equiv\alpha_*\bold E$. \end{thm}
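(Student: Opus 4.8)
The plan is to reduce the statement to producing a single morphism of extensions and then to quote the pushout characterization. By part (3) of the Corollary characterizing $\alpha_*\bold E$ as a pushout (the one beginning ``Let $\alpha:H\to H$ be an isomorphism''), the mere existence of some $(\alpha,1):\bold E\to\bold E'$ forces $\bold E'\equiv\alpha_*\bold E$. Taking $\bold E'=\bold E$, it therefore suffices to construct one $\gamma\in(\alpha,\id)$, i.e. a homomorphism $\gamma:G\to G$ with $\gamma\,i=i\,\alpha$ and $\pi\gamma=\pi$; the conclusion $\bold E\equiv\alpha_*\bold E$ is then immediate.

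The obvious candidate is inner conjugation $\gamma=c_g:G\to G$, $\gamma(x)=g+x-g$ (recall $G$ is written additively). I would verify the two conditions defining a map of extensions. Since $H$ is normal, $c_g$ carries $H$ to $H$, and by hypothesis $c_g\vert H=\alpha$, so $\gamma\,i=i\,\alpha$. For the quotient, compute $\pi(\gamma(x))=\pi(g+x-g)=\pi(g)\,\pi(x)\,\pi(g)^{-1}$; this equals $\pi(x)$ for every $x\in G$ exactly when $\pi(g)$ is central in $Q$, which is precisely the hypothesis $\pi(g)\in zQ$. Hence $\pi\gamma=\pi=\id\circ\pi$ and $\gamma\in(\alpha,\id)$.

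The whole point---and the only thing that requires the hypothesis---is that $\pi(g)\in zQ$ is exactly the condition making conjugation by $g$ descend to the identity on $Q$; the special case $g\in H$ is covered because then $\pi(g)=1\in zQ$. There is no genuinely hard step: $\gamma=c_g$ is automatically an automorphism of $G$ (being inner), so it is a legitimate morphism of extensions, and the Corollary then finishes the argument. The only care needed is the bookkeeping forced by the mixed additive/multiplicative conventions when checking $\pi\gamma=\pi$.
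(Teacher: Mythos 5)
Your proposal is correct and is essentially the paper's own argument: the paper's (one-line) proof likewise observes that $c_g:G\to G$ defines a map $(\alpha,\id):\bold E\to\bold E$ and then invokes the pushout characterization of $\alpha_*\bold E$ to conclude $\alpha_*\bold E\equiv\bold E$. You have merely spelled out the two verifications ($c_g\vert H=\alpha$ and $\pi c_g=\pi$, the latter being exactly where $\pi(g)\in zQ$ is used) that the paper leaves implicit.
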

\begin{proof} %Suppose $u:Q\to G$ gives rise to $\varphi:Q\to\op{Aut}(H)$ and $f:Q\times Q\to H$.  Then $\varphi_\alpha q=\alpha\varphi(q)\alpha^{-1}=c_{\bar h+uq-\bar n}$.  Let $\lambda q=\bar h+uq-\bar h-uq\in H$.  Then $\varphi_\alpha q=c_{\lambda q}\varphi q$.  If $v q=\lambda q+u q$, then by the previous proposition we have an equivalence $\bold E_{f,\varphi}\to\bold E_{f',\varphi'}$ (given by $(h,q)\mapsto (h-\lambda q,q)$) where $\varphi'(q)=c_{\lambda q}\varphi(q)=\varphi_\alpha q$ and 
%$$\begin{aligned} f'(q,q') &=\lambda q+\varphi(q)\lambda q'+f(q,q')-\lambda(qq')\\
	%&= (\bar h+uq-\bar h-uq)+(uq+\bar h+uq'-\bar h-uq'-uq )\\
	%&\qquad +f(q,q')-(\bar h+u(qq')-\bar h-u(qq'))\\
	%&=\bar h+uq+uq'-\bar h-uq'-uq+f(q,q')+u(qq')+\bar h-u(qq')-\bar h\\
	%&=\bar h+uq+uq'-\bar h+\bar h-u(qq')-\bar h\\
	%&=\bar h+f(q,q')-\bar h=c_{\bar h}f(q,q')=f_\alpha(q,q'). \end{aligned}$$
$c_g:G\to G$ defines a map $(\alpha,\id):\be\to \be$ and hence $\alpha_*\be\equiv\be$.
\end{proof}

\begin{thm}\label{MT} Suppose $\alpha$ and $\beta$ are automorphisms. Consider the following diagram

$$\begin{CD}\bold E:\quad 1 @>>> H @>i>> G @>\pi>> Q @>>> 1 \\
@.      @V\alpha VV	@.  @V\beta VV   @.\\
\bold E:\quad 1 @>>> H @>i'>> G @>\pi'>> Q @>>> 1  \end{CD}$$
with $\alpha$ an automorphism and suppose 
$$\begin{CD} Q @>\Phi>> \op{Out}(H)  \\
	@V\beta VV		@VV c_{[\alpha]} V\\
	Q @>\Phi >> \op{Out}(H)  \end{CD}$$ commutes. 
Then there exists $(\alpha,\beta):\be\to \be$ if and only if
$\alpha_*\bold E\equiv\beta^*\bold E\in\op{\mathcal{X}}_{\Phi\beta}(Q,H)$\end{thm}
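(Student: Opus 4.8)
The plan is to deduce both implications from the universal properties already in place: that $\alpha_*\bold E$ is a pushout and that $\beta^*\bold E$ is a pullback (fiber product) in the category of extensions. First I would record the only role played by the hypothesis that the square commutes. By the earlier computations $\alpha_*\bold E$ carries outer action $c_{[\alpha]}\Phi$, while $\beta^*\bold E$ carries $\Phi\beta$; the commuting square is precisely what forces both into the same set $\op{\mathcal{X}}_{\Phi\beta}(Q,H)$, so that the assertion $\alpha_*\bold E\equiv\beta^*\bold E$ even makes sense. Since every construction involved is invariant under equivalence, I may also assume $\bold E=\bold E_{f,\varphi}$ throughout.

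For the implication $\alpha_*\bold E\equiv\beta^*\bold E\Rightarrow(\alpha,\beta)$ I would simply compose three maps. The canonical map gives $(\alpha,\id):\bold E\to\alpha_*\bold E$; an equivalence supplies $(\id,\id):\alpha_*\bold E\to\beta^*\bold E$; and the defining square of the pullback gives $(\id,\beta):\beta^*\bold E\to\bold E$. Reading off the labels, the composite is a map with $H$-component $\id\circ\id\circ\alpha=\alpha$ and $Q$-component $\beta\circ\id\circ\id=\beta$, that is $(\alpha,\beta):\bold E\to\bold E$, which is what is wanted.

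The reverse implication is the substantive one. Starting from $(\alpha,\beta):\bold E\to\bold E$, the pushout property of $\alpha_*\bold E$ yields a unique $(\id,\beta):\alpha_*\bold E\to\bold E$ factoring it as $(\alpha,\beta)=(\id,\beta)(\alpha,\id)$. Now $\alpha_*\bold E$ is an extension of $Q$ by $H$, and this factoring map consists of a group homomorphism $E_{f_\alpha,\varphi_\alpha}\to G$ covering $\beta$, together with the projection $E_{f_\alpha,\varphi_\alpha}\to Q$; these are compatible over $\beta$, so the fiber-product property of $\beta^*\bold E$ produces a unique homomorphism $\alpha_*\bold E\to\beta^*\bold E$ lying over $\id_Q$ and restricting to $\id_H$, i.e. a map $(\id,\id):\alpha_*\bold E\to\beta^*\bold E$, hence an equivalence. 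The step I expect to be the main obstacle is exactly this last one: I must confirm that the pullback enjoys the dual universal property in the precise ``map covering $\beta$'' form needed here, and that the induced map genuinely restricts to the identity on $H$, so that it is an honest equivalence rather than merely an abstract isomorphism. If one prefers to bypass the pullback universal property, the same conclusion follows by unwinding Proposition \ref{map}: writing $\alpha_*\bold E=\bold E_{\alpha f,\,c_\alpha\varphi}$ and $\beta^*\bold E\equiv\bold E_{f(\beta\times\beta),\,\varphi\beta}$, one checks that the cocycle $\sigma:Q\to H$ witnessing $\alpha_*\bold E\equiv\beta^*\bold E$ and the map $\tau:Q\to H$ supplied by $(\alpha,\beta)$ in equation $(1)$ satisfy literally the same identities, so that $\sigma=\tau$ converts one datum into the other.
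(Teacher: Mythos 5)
Your proof is correct, and your ``if'' direction is literally the paper's: both compose $(\alpha,\id):\be\to\alpha_*\be$, the given equivalence $(\id,\id):\alpha_*\be\to\beta^*\be$, and the pullback projection $(\id,\beta):\beta^*\be\to\be$. The converse is where you genuinely diverge. The paper exploits invertibility of both maps: from the canonical maps it manufactures $(\alpha^{-1},\id):\alpha_*\be\to\be$ and $(\id,\beta^{-1}):\be\to\beta^*\be$, and then simply sandwiches, so that $(\id,\beta^{-1})(\alpha,\beta)(\alpha^{-1},\id):\alpha_*\be\to\beta^*\be$ has components $(\id,\id)$ and is therefore an equivalence --- a two-line argument, but one that uses $\beta^{-1}$ essentially. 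You instead factor $(\alpha,\beta)=(\id,\beta)(\alpha,\id)$ through the pushout and then invoke the universal property of the fiber product; the step you flag as the main obstacle is real but discharges routinely: with $X=\{(g,q)\in G\times Q\vv \pi g=\beta q\}$, the group homomorphism underlying $(\id,\beta):\alpha_*\be\to\be$ together with the projection $\alpha_*\be\to Q$ are compatible over $\beta$, the induced map sends $h\mapsto(h,1)$, hence is $(\id,\id)$ and is an isomorphism by the short five lemma (this last point, that a map of extensions with components $(\id,\id)$ is automatically an equivalence, is used silently by the paper too). Your backup route through Proposition \ref{map} also checks out: substituting $h'=\alpha h$ turns equation (1) for a $\sigma$ witnessing $(\alpha,\beta):\be_{f,\varphi}\to\be_{f,\varphi}$ into precisely the identity characterizing $(\id,\id):\be_{f_\alpha,\varphi_\alpha}\to\be_{f(\beta\times\beta),\varphi\beta}$ with the same $\sigma$; this is the same manipulation the paper uses to prove its pushout factorization proposition, so your alternative is a concrete unwinding of the abstract argument. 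What your route buys is that the converse never uses $\beta^{-1}$, so it would prove the theorem for an arbitrary endomorphism $\beta$ (only $\alpha$ must be an automorphism for $\alpha_*$ to be defined); what the paper's route buys is brevity, needing nothing beyond the canonical maps and formal composition.
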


\begin{proof} If $(\id,\id):\alpha_*\be\to\beta^*\be$ then we have $(\alpha,\beta):\be\to\be$ given by $$(\id,\beta)(\id,\id)(\alpha,\id):\be\to\alpha_*\be\to\beta^*\be\to\be.$$
Conversely since there exists $(\alpha,\id):\be\to\alpha_*\be$ there exists $(\alpha^{-1},\id):\alpha_*\be\to\be$ and similarly for $\beta$.  Hence if $(\alpha,\beta):\alpha_*\be\to\beta^*\be$ exists,
$$(\id,\beta^{-1})(\alpha,\beta)(\alpha^{-1},\id):\alpha_*\be\to\be\to\be\to\beta^{-1}\be$$ is an equivalence $(\id,\id):\alpha_*\be\to\beta^*\be$.\end{proof}
%\noindent {\it Proof of the Theorem.} If $\bold E\equiv\bold E_{f,\varphi}$ then by the lemma there exists $\gamma:E_{f,\varphi}\to E_{f,\varphi}$ extending $\alpha$ and $\beta$ if and only if there exist $\sigma:Q\to H$ such that for all $h\in H$, $q,q'\in Q$
%$$ \sigma q+\varphi(\beta h)[\alpha q+\sigma q']+f(\beta q,\beta q')=\alpha[\varphi(q)h]+\alpha f(q,q')+\sigma(qq').
%$$ \bar\sigma q+\varphi^\beta(q)[h+\bar\sigma q']+f^\beta(q,q')=[\varphi_\alpha(q)h]+f_\alpha(q,q')+\bar\sigma(qq')
%$$ if and only if
%$$ \bar\sigma q+\varphi(\beta q)[h+\bar\sigma q']+f(\beta q,\beta q')=[\alpha\varphi(q)\alpha^{-1}h]+\alpha f(q,q')+\bar\sigma(qq').
%$$ 
%If we set $\alpha h=h'$ we see these two statement are exactly the same.$\qquad\square$

\section{The action of $\mathcal S$ on $\mathcal X_\Phi(Q,H)$ and $H^2(Q,zH)$}

%\begin{cor} Suppose $\Phi\beta=c_{[\alpha]}\Phi:Q\to\op{Out}(H)$ then there exists a well defined cohomology class $\rho_\bold E(\alpha,\beta)\in H^2(Q,\,zH)$ whose vanishing is necessary and sufficient for there to exist $\gamma:G\to G$ extending $\alpha$ and $\beta$. \end{cor}
%\begin{proof} $\alpha^{-1}_*\beta^*:\op{\mathcal{X}}_\Phi(Q,H)\to \op{\mathcal{X}}_{\Phi\beta=c_{[\alpha]\Phi}}(Q,H)\to\op{\mathcal{X}}_\Phi(Q,H).$  $\alpha_*\bold E\equiv\beta^*\bold E$ if and only if $\bold E\equiv\alpha^{-1}_*\beta^*\bold E$.  Choosing any basepoint $\bold E_0\in\mathcal{X}_\Phi(Q,H)$ we may identify $\mathcal{X}_\Phi(Q,H)$ with $H^2(Q,zH)$.  Hence subtraction is well defined in $\mathcal{X}_\Phi(Q,H)$, that is independent of the base point.  Therefore $\rho_\bold E(\alpha,\beta)=\alpha^{-1}_*\beta^*\bold E-\bold E$ is a well defined cohomology class in $H^2(Q,\,zH)$ whose vanishing is necessary and sufficient for $\alpha_*\bold E\equiv\beta^*\bold E$.
%\end{proof}

%\subsection{Computing $\rho_E(\alpha,\beta)$.}

\begin{defn}$\mathcal S=\{(\alpha,\beta)\in\op{Aut}(H)\times\op{Aut}(Q)\,\vert\, \Phi\,\beta=c_{[\alpha]}\Phi]\}$. \end{defn}
 This is clearly a subgroup.  Since $\alpha_*:\EXT_\Phi(Q,H)\to\EXT_{c_{[\alpha]}\Phi}(Q,H)$ and $\beta^*:\EXT_\Phi(Q,H)\to\EXT_{\Phi\beta}(Q,H)$, if $\theta=(\alpha,\beta)\in\mathcal S$ then $\alpha^{-1}_*\beta^*:\EXT_\Phi(Q,H)\to\EXT_\Phi(Q,H)$.

\begin{prop} If $\theta=(\alpha,\beta)\in\mathcal S$, Then $\bold E\,\theta:= \alpha^{-1}_*\beta^*\,\bold E$ defines a right action of $\mathcal S$ on $\EXT_\Phi(Q,H)$.\end{prop}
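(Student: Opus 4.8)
The plan is to check the two axioms of a right action: that the identity of $\mathcal S$ fixes every class, and that $(\be\,\theta_1)\,\theta_2\equiv \be\,(\theta_1\theta_2)$ for all $\theta_1,\theta_2\in\mathcal S$. The identity axiom is immediate, since $\id_*$ and $\id^*$ leave each equivalence class unchanged, so $\be\,(\id,\id)=(\id)^{-1}_*\,\id^*\,\be\equiv\be$. All the content therefore lies in compatibility with composition, and the argument is purely formal once one recalls that multiplication in $\mathcal S\subseteq\op{Aut}(H)\times\op{Aut}(Q)$ is componentwise, so that for $\theta_i=(\alpha_i,\beta_i)$ one has $\theta_1\theta_2=(\alpha_1\alpha_2,\beta_1\beta_2)$.

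First I would write both sides explicitly. The left side is $(\be\,\theta_1)\,\theta_2=(\alpha_2^{-1})_*\,\beta_2^*\,(\alpha_1^{-1})_*\,\beta_1^*\,\be$, while the right side is $\be\,(\theta_1\theta_2)=((\alpha_1\alpha_2)^{-1})_*\,(\beta_1\beta_2)^*\,\be$. The whole problem is to transform the four-fold composite on the left into the two-fold composite on the right, and this is done using only the three equivalences of the preceding proposition. The key manipulation is to move $\beta_2^*$ past $(\alpha_1^{-1})_*$ by part (3), obtaining $(\alpha_2^{-1})_*\,(\alpha_1^{-1})_*\,\beta_2^*\,\beta_1^*\,\be$. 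Now part (1) collapses the two pushouts, $(\alpha_2^{-1})_*(\alpha_1^{-1})_*\equiv(\alpha_2^{-1}\alpha_1^{-1})_*=((\alpha_1\alpha_2)^{-1})_*$, the order-reversal being supplied precisely by the inverse; dually part (2) collapses the two pullbacks, where the contravariance of pullback does the reversing directly, $\beta_2^*\beta_1^*\equiv(\beta_1\beta_2)^*$. Combining these yields $((\alpha_1\alpha_2)^{-1})_*(\beta_1\beta_2)^*\,\be=\be\,(\theta_1\theta_2)$, as required. It is worth noting that using $\alpha^{-1}$ rather than $\alpha$ in the definition is exactly what converts the covariance of the pushout into the contravariance needed for a right, rather than left, action.

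The one point requiring genuine care---and the step I expect to be the main obstacle---is the bookkeeping of the outer-action subscripts, so that each of the three equivalences is invoked between the sets $\EXT_\Psi(Q,H)$ on which it is valid. At every stage one must use the defining relation $\Phi\beta=c_{[\alpha]}\Phi$ of $\mathcal S$ to verify that the intermediate extension lands in $\EXT_\Psi(Q,H)$ for the correct $\Psi$; this is exactly the closure statement in the remark preceding the proposition, which guarantees that $\alpha^{-1}_*\beta^*$ maps $\EXT_\Phi(Q,H)$ to itself. Granting this, together with the fact that the action is well defined on equivalence classes (by the corollary that $\alpha_*\be$ is well defined and the observation that $\beta^*$ respects equivalence), the displayed computation completes the verification.
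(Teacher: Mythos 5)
Your proof is correct and follows essentially the same route as the paper: the paper's (one-line) proof likewise reduces the verification to the commutativity $\alpha_*\beta^*\bold E\equiv\beta^*\alpha_*\bold E$ together with the functoriality equivalences $(\alpha'\alpha)_*\bold E\equiv\alpha'_*(\alpha_*\bold E)$ and $(\beta'\beta)^*\bold E\equiv\beta^*(\beta'^*\bold E)$ from the preceding proposition, all checked on the explicit models $\bold E_{f,\varphi}$. Your version simply spells out the bookkeeping (the inverse converting covariance of pushout into a right action, well-definedness on equivalence classes, and closure in $\EXT_\Phi(Q,H)$) that the paper leaves implicit.
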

\begin{proof} One only needs to note that for any $\alpha\in\op{Aut}H$ and $\beta\in\op{End}Q$ that $\alpha_*\beta^*\,\bold E=\beta^*\alpha_*\,\bold E$ and this is trivial when one expresses $\bold E=\bold E_{f,\varphi}$.\end{proof}

$\Phi$ gives a well defined $Q$-module structure to $zH$.  The automorphism $\alpha:H\to H$ maps $zH\to zH$ and $\alpha$ and $\beta$ define homomorphisms $\alpha_*,\beta^*:H^2(Q,{}_\beta\,zH)\to H^2(Q,\,{}_\beta\,zH)$ and hence a well defined homomorphism $\theta^*=\alpha^{-1}_*\beta^*:H^2(Q,\,zH)\to H^2(Q,\,zH)$. It is easy to see that $(\theta'\theta)^*=\theta^*\theta'^*$ and hence a right action of $\mathcal S$ on $H^2(Q,zH)$, $\zeta\,\theta=\theta^*(\zeta)$.  $H^2(Q,zH)$ acts simply transitively on (the right) $\op{\mathcal{X}}_\Phi(Q,H)$ as follows.  If $\bold E=\bold E_{\varphi,f}\in\EXT_\Phi(Q,H)$ and $\zeta=[c]\in H^2(Q,zH)$ then $\be\cdot\zeta=\bold E_{\varphi,f+c}$. An easy calculation shows

\begin{prop} i) $\alpha_*(\bold E\,\zeta)=(\alpha_*\bold E)\,\alpha_*\zeta$, ii) $\beta^*(\bold E\,\zeta)=(\beta^*\bold E)\beta^*\zeta$ and hence if $\theta\in\mathcal S$, $\zeta\in H^2(Q,zN)$, $(\bold E\cdot\zeta)\theta=\bold E\theta\cdot\zeta\theta$.\end{prop}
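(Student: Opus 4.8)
The plan is to reduce everything to factor sets and verify the identities by inspection. Fix a normalized section so that $\be\equiv\be_{f,\varphi}$ and choose a $2$-cocycle $c$ representing $\zeta$, so that $\be\cdot\zeta=\be_{f+c,\varphi}$. Recall that adding the central cocycle $c$ leaves $\varphi$ untouched (conjugation by $c(q,q')\in zH$ is the identity) and preserves conditions (2) and (3), the latter being exactly the cocycle condition for $c$ with respect to the $\Phi$-module structure on $zH$.

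For (i) I would simply compute both sides. By definition $\alpha_*\be_{f+c,\varphi}=\be_{\alpha(f+c),\,c_\alpha\varphi}=\be_{\alpha f+\alpha c,\,\varphi_\alpha}$, while $(\alpha_*\be)\cdot\alpha_*\zeta=\be_{\alpha f,\varphi_\alpha}\cdot[\alpha c]=\be_{\alpha f+\alpha c,\varphi_\alpha}$, since the coefficient map $\alpha_*$ on $H^2$ is induced on cochains by $c\mapsto\alpha c$. The two agree on the nose. The one point deserving a word is that $\alpha\colon{}_\Phi zH\to{}_{c_{[\alpha]}\Phi}zH$ is $Q$-equivariant, i.e. $\alpha(\varphi(q)n)=\varphi_\alpha(q)(\alpha n)$, which is what makes $\alpha c$ a cocycle again and $\alpha_*$ well defined on $H^2$ into the twisted coefficients.

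For (ii) the same bookkeeping applies with $\beta^*$ in place of $\alpha_*$. One has $\beta^*\be_{f+c,\varphi}=\be_{(f+c)(\beta\times\beta),\,\varphi\beta}=\be_{f(\beta\times\beta)+c(\beta\times\beta),\,\varphi\beta}$, whereas $(\beta^*\be)\cdot\beta^*\zeta=\be_{f(\beta\times\beta),\varphi\beta}\cdot[c(\beta\times\beta)]=\be_{f(\beta\times\beta)+c(\beta\times\beta),\varphi\beta}$, since $\beta^*$ on $H^2$ is precomposition with $\beta\times\beta$ on cochains. Again the two sides coincide.

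Finally, (iii) is purely formal given (i) and (ii). For $\theta=(\alpha,\beta)\in\mathcal S$ we have $\be\theta=\alpha^{-1}_*\beta^*\be$ and $\zeta\theta=\alpha^{-1}_*\beta^*\zeta$, so
\[(\be\cdot\zeta)\theta=\alpha^{-1}_*\beta^*(\be\cdot\zeta)=\alpha^{-1}_*\big((\beta^*\be)\cdot\beta^*\zeta\big)=(\alpha^{-1}_*\beta^*\be)\cdot(\alpha^{-1}_*\beta^*\zeta)=(\be\theta)\cdot(\zeta\theta),\]
applying (ii) and then (i). The only remaining point is well-definedness: replacing $c$ by a cohomologous cocycle alters $\be_{f+c,\varphi}$ only within its equivalence class and is transported compatibly by $\alpha_*$ and $\beta^*$; this is immediate since the $H^2(Q,zH)$-action was already shown to be well defined and $\alpha_*,\beta^*$ are defined on cohomology classes. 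I do not anticipate any real obstacle here---the content lies entirely in correctly matching the two coefficient maps with their cochain-level formulas $c\mapsto\alpha c$ and $c\mapsto c(\beta\times\beta)$.
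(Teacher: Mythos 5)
Your proposal is correct and follows exactly the route the paper intends: the paper dispatches this proposition as ``an easy calculation'' in the normal form $\bold E=\bold E_{f,\varphi}$, and your cochain-level computations ($\alpha_*$ as $c\mapsto\alpha c$, $\beta^*$ as $c\mapsto c(\beta\times\beta)$, then the formal composition for $\theta=(\alpha,\beta)$) are precisely that calculation, spelled out with the equivariance and well-definedness points made explicit.
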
 

%Choosing $\bold E_0\in\op{\mathcal{X}}(Q,H)$ we may identify $H^2(Q,zH)$ and $\op{\mathcal{X}}(Q,H)$ via the identification $\zeta\mapsto\bold E_0\cdot\zeta$. 

%\begin{cor} Suppose $\bold E_0$ is a fixed point of $\op{\mathcal{X}_\Phi(Q,H)}$ under the action of $\mathcal S$.  Then the bijection $H^2(Q,zH)\to \op{\mathcal{X}_\Phi(Q,H)}$ given by $\zeta\mapsto \bold E_0\cdot\zeta$ is $\mathcal S$ equivariant and therefore $\mathcal S$ acts on $\op{\mathcal{X}_\Phi(Q,H)}$ as a group of homomorphisms.  \end{cor} 

Recall that an extension $1\to H\to G\rar{\pi} Q\to 1$ is split if there exists a homomorphism $s:Q\to G$ with $\pi\,s=\text{id}$.

\begin{prop}\label{split} If $\bold E$ is a split extension then both $\alpha_*\bold E$ and $\beta^*\bold E$ are split extensions.\end{prop}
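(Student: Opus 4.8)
The plan is to reduce to the canonical model $\bold E_{f,\varphi}$ and to observe that splitness of an extension corresponds exactly to the vanishing of the factor set $f$. First I would recall that $\bold E$ is split precisely when it admits a \emph{homomorphic} section $s:Q\to G$, i.e.\ $\pi s=\id$ with $s$ a homomorphism. Choosing $u=s$ in the defining relation $u(q)+u(q')=f(q,q')+u(qq')$ then forces $f\equiv 0$, so $\bold E\equiv\bold E_{0,\varphi}$. Conversely, whenever $f\equiv 0$ condition $(2)$ reads $\varphi(q)\varphi(q')=\varphi(qq')$, so $\varphi:Q\to\op{Aut}(H)$ is a genuine homomorphism and the map $q\mapsto(0,q)$ is a homomorphic section of $\bold E_{0,\varphi}$. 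Thus $\bold E$ is split if and only if $\bold E\equiv\bold E_{0,\varphi}$ for some homomorphism $\varphi$. Moreover splitness is an invariant of the equivalence class: if $(\id,\id)$ is realized by $\gamma:G\to G'$ and $s$ is a homomorphic section of $\bold E$, then $\gamma s$ is a homomorphic section of $\bold E'$. This legitimizes passing to the model, since $\alpha_*\bold E$ and $\beta^*\bold E$ are well defined on equivalence classes.

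Next I would simply apply the definitions of $\alpha_*$ and $\beta^*$ to this model. By definition $\alpha_*\bold E_{0,\varphi}=\bold E_{f_\alpha,\varphi_\alpha}$ with $f_\alpha=\alpha f$ and $\varphi_\alpha(q)=\alpha\varphi(q)\alpha^{-1}$; since $f\equiv 0$ we get $f_\alpha=\alpha\circ 0\equiv 0$, whence $\alpha_*\bold E\equiv\bold E_{0,\varphi_\alpha}$. Similarly, by the pullback formula $\beta^*\bold E_{0,\varphi}\equiv\bold E_{f(\beta\times\beta),\varphi\beta}$, and $f(\beta\times\beta)\equiv 0$ because $f\equiv 0$, so $\beta^*\bold E\equiv\bold E_{0,\varphi\beta}$. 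In each case the transported factor set again vanishes, so by the first step both $\alpha_*\bold E$ and $\beta^*\bold E$ are split. One may note directly that $\varphi_\alpha$ and $\varphi\beta$ are again homomorphisms, but with $f\equiv 0$ this is automatic from condition $(2)$.

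The argument has essentially no obstacle: the only thing to get right is the translation between the geometric statement (existence of a homomorphic section) and the algebraic statement ($f\equiv 0$), together with the bookkeeping that $\alpha_*$ multiplies $f$ on the left by $\alpha$ while $\beta^*$ precomposes $f$ with $\beta\times\beta$, neither of which can produce a nonzero factor set from the zero one. Conceptually this is just the familiar fact that pullbacks of split extensions split (use $q'\mapsto(s(\beta q'),q')$ into the pullback $X$) and that pushing a semidirect product out along an isomorphism of the kernel merely relabels it as another semidirect product; the model computation above makes both precise at once.
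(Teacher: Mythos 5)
Your proof is correct, and it reaches the same conclusion through a genuinely different mechanism than the paper for the $\alpha_*$ half. For $\beta^*\bold E$ the paper does exactly what you relegate to your closing remark: a splitting of the pullback comes straight from the definition of $X$ (your $q'\mapsto(s(\beta q'),q')$). For $\alpha_*\bold E$, however, the paper does \emph{not} renormalize the factor set to zero: it keeps an arbitrary representative $\bold E_{f,\varphi}$, writes a splitting as $s(q)=(\tau q,q)$, so that $\tau(qq')=\tau q+\varphi(q)\tau q'+f(q,q')$, and checks directly that $u(q)=(\alpha\tau q,q)$ is a homomorphic section of $\bold E_{f_\alpha,\varphi_\alpha}$, the required identity being the image of the previous one under $\alpha$. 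You instead first establish the dictionary ``$\bold E$ splits if and only if $\bold E\equiv\bold E_{0,\varphi}$'' by taking the homomorphic section itself as the normalized section (forcing $f\equiv 0$ and, via condition (2), making $\varphi$ a homomorphism), and then observe that $\alpha_*$ and $\beta^*$ carry the zero factor set to $\alpha\circ 0\equiv 0$ and $0\circ(\beta\times\beta)\equiv 0$. Your route requires two auxiliary facts the paper's direct computation avoids --- that splitness is invariant under equivalence of extensions (your $\gamma s$ argument) and that $\alpha_*$, $\beta^*$ are well defined on equivalence classes --- though both are available in the paper and could simply be cited; in exchange it buys a uniform one-line treatment of both operations and isolates the clean structural principle that splitness is precisely the vanishing of the factor set for a suitable choice of section. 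Both arguments are complete; yours is the more conceptual, the paper's the more economical.
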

\begin{proof} That the pullback $\beta^*\bold E$ is split follows from the definition of the pullback.  If $\bold E=\bold E_{\varphi,f}$ and $s(q)=(\tau q,q)$ is a splitting for $\bold E$ the fact that $s$ is a homomorphism translates to the fact that
$$ \tau(qq')=\tau q+\varphi(q)\tau q'+f(q,q').   \qquad(*)$$
Defining $u(q)=(\alpha\tau q,q)$, $u$ is a homomorphism if and only if
$$ \alpha\tau qq'=\alpha\tau q+\alpha\varphi\alpha^{-1}(\alpha\tau q') +\alpha f(q,q') $$ which follows from $(*)$ by applying $\alpha$.\end{proof}

\begin{remark} Unfortunately this does not show that split extensions are always fixed points, but only that under the action of $\mathcal S$ on $\mathcal X_\Phi(Q,H)$ split extensions are always taken to split extensions.  If there exists a unique split extension in $\mathcal X_\Phi(Q,H)$, e.g. if $H$ is abelian, then we have a fixed point.  But for most non-abelian $H$ with non-trivial center there exist many split extensions in in a given outermorphism class $\Phi$.  For example if $H$ is the dihedral or quaternion group of order $8$ or $16$, $Q$ is $\Z/2$ or $\Z/4$ and $Q\to\op{Out}H$ the trivial homomorphism, then $\mathcal X_\Phi(Q,H)$ consists of two elements and both are split extensions.  Whether this is always true is yet to be decided.
\end{remark}

Now subtraction defines a map
$$s:\mathcal X_\Phi(Q,H)\times \mathcal X_\Phi(Q,H) \to H^2(Q,zH)$$ as follows.  Let $\be,\be'\in\EXT_\Phi(Q,H)$. Choose any basepoint $\bold E_0$.  Then $\bold E\equiv\bold E_0\zeta$, $\bold E'\equiv\bold E_0\zeta'$ with $\zeta,\zeta'\in H^2(Q,zH)$, and define $s(\be,\be')=\zeta-\zeta'\in H^2(Q,zH)$.  This is independent of $\bold E_0$.  Denote $s(\bold E,\bold E')=(\bold E-\bold E')$.  $\mathcal S$ acts (on the right) on both $\mathcal X_\Phi(Q,H)\times \mathcal X_\Phi(Q,H)$ and $H^2(Q,zH)$.

\begin{prop} \label{diff} i) $s$ is equivariant, i.e., if $\theta\in\mathcal S$ then $(\bold E-\bold E')\theta=(\bold E\theta-\bold E'\theta)$.
\par ii) $(\bold E-\bold E')=-(\bold E'-\bold E)$.
\par iii) $(\bold E-\bold E')+(\bold E'-\bold E'')=(\bold E-\bold E'')$.
\end{prop}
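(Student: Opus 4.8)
The plan is to reduce all three assertions to the simply transitive right action of $H^2(Q,zH)$ on $\mathcal X_\Phi(Q,H)$ by fixing once and for all a basepoint $\bold E_0$ and writing $\bold E\equiv\bold E_0\zeta$, $\bold E'\equiv\bold E_0\zeta'$, and $\bold E''\equiv\bold E_0\zeta''$ with $\zeta,\zeta',\zeta''\in H^2(Q,zH)$. By the very definition of $s$ we then have $(\bold E-\bold E')=\zeta-\zeta'$, and likewise for the other differences. Since $s$ is independent of the chosen basepoint, this reformulation loses no information.

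Parts (ii) and (iii) are then immediate and require no further input: using $\bold E_0$ as a common basepoint, $(\bold E'-\bold E)=\zeta'-\zeta=-(\zeta-\zeta')=-(\bold E-\bold E')$, and $(\bold E-\bold E')+(\bold E'-\bold E'')=(\zeta-\zeta')+(\zeta'-\zeta'')=\zeta-\zeta''=(\bold E-\bold E'')$.

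The content is in part (i), and here I would feed the preceding proposition into the computation. Applying $\theta=(\alpha,\beta)\in\mathcal S$ and using the identity $(\bold E_0\zeta)\theta=(\bold E_0\theta)(\zeta\theta)$ established there, I get $\bold E\theta\equiv(\bold E_0\theta)(\zeta\theta)$ and $\bold E'\theta\equiv(\bold E_0\theta)(\zeta'\theta)$. Thus $\bold E_0\theta$ is a legitimate common basepoint for $\bold E\theta$ and $\bold E'\theta$, and independence of $s$ from the basepoint gives $(\bold E\theta-\bold E'\theta)=\zeta\theta-\zeta'\theta$. It then remains only to compare this with $(\bold E-\bold E')\theta=(\zeta-\zeta')\theta$.

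The final step is where the additive structure of cohomology enters: the action of $\theta$ on $H^2(Q,zH)$ is $\zeta\theta=\theta^*(\zeta)$ with $\theta^*=\alpha^{-1}_*\beta^*$ a genuine group homomorphism, not merely a set map. Hence $\zeta\theta-\zeta'\theta=\theta^*(\zeta)-\theta^*(\zeta')=\theta^*(\zeta-\zeta')=(\zeta-\zeta')\theta$, which is exactly $(\bold E-\bold E')\theta$, completing (i). The single substantive ingredient is this additivity of $\theta^*$; I would flag it explicitly, since everything else is bookkeeping with the common basepoint $\bold E_0$ and its image $\bold E_0\theta$. No genuine obstacle arises, as the equivariance of $s$ is essentially forced once one knows both that $H^2(Q,zH)$ acts simply transitively and that $\theta$ acts compatibly and additively on the two sides.
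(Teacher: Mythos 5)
Your proposal is correct and takes essentially the same route as the paper: fix a basepoint $\bold E_0$, write $\bold E=\bold E_0\zeta$, $\bold E'=\bold E_0\zeta'$, invoke the compatibility $(\bold E_0\zeta)\theta=(\bold E_0\theta)(\zeta\theta)$ from the preceding proposition together with basepoint-independence of $s$ to get $(\bold E\theta-\bold E'\theta)=\zeta\theta-\zeta'\theta$, with (ii) and (iii) immediate. The only difference is that you explicitly flag the final step $\zeta\theta-\zeta'\theta=(\zeta-\zeta')\theta$, i.e.\ the additivity of $\theta^*=\alpha^{-1}_*\beta^*$ on $H^2(Q,zH)$, which the paper leaves implicit (having already recorded that $\theta^*$ is a homomorphism when the action was defined); making it explicit is a small improvement in exposition, not a change of argument.
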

\begin{proof} i) If $\bold E_0$ is a base point and $\bold E=\bold E_0a$ and $\bold E'=\bold E_0b$ then $(\bold E-\bold E')=a-b$, $\bold E\theta=(\bold E_0a)\theta=(\bold E_0\theta)a\theta$ and similarly for $\bold E'\theta$.  Since $(\bold E\theta-\bold E'\theta)$ is independent of the base points $(\bold E\theta-\bold E'\theta)=a\theta-b\theta$. ii) and iii) are clear from the definition.\end{proof}

Define $\lambda_\be:\mathcal S\to H^2(Q,zH)$ by $\lambda_\be(\theta)=(\be-\be\theta)$.

\begin{prop} $\lambda_{\bold E}:\mathcal S\to H^2(Q,zH)$ is a derivation, that is $$\lambda_\bold E(\theta\theta')=\lambda_\bold E(\theta)\theta'+\lambda_\bold E(\theta').$$ \end{prop}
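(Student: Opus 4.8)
The plan is to reduce the entire statement to the formal properties of the subtraction map $s$ collected in Proposition \ref{diff}, together with the fact that $\bold E\theta:=\alpha^{-1}_*\beta^*\bold E$ is a genuine \emph{right} action (so $\bold E(\theta\theta')=(\bold E\theta)\theta'$) and that the $\mathcal S$-action on $H^2(Q,zH)$ is by $\zeta\theta=\theta^*(\zeta)$. The derivation identity is the standard phenomenon that a group acting on a torsor produces a crossed homomorphism by taking the difference between a basepoint and its translate; here the torsor is $\mathcal X_\Phi(Q,H)$ over $H^2(Q,zH)$, the basepoint is $\bold E$, and $\lambda_\bold E(\theta)=(\bold E-\bold E\theta)$.

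First I would expand the target: $\lambda_\bold E(\theta\theta')=(\bold E-\bold E(\theta\theta'))$, and rewrite $\bold E(\theta\theta')=(\bold E\theta)\theta'$ using associativity of the action. The key step is then to apply the equivariance of $s$, Proposition \ref{diff}(i), to the class $\lambda_\bold E(\theta)=(\bold E-\bold E\theta)$ translated by $\theta'$: this gives $\lambda_\bold E(\theta)\theta'=(\bold E-\bold E\theta)\theta'=(\bold E\theta'-(\bold E\theta)\theta')=(\bold E\theta'-\bold E(\theta\theta'))$. This is the one step that does real work, since it is exactly what converts the abstract $\theta'$-action on a difference \emph{class} in cohomology into a difference of the correctly $\theta'$-translated \emph{extensions}.

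Finally I would add $\lambda_\bold E(\theta')=(\bold E-\bold E\theta')$ and telescope via the additivity identity of Proposition \ref{diff}(iii), choosing the intermediate extension $\bold E\theta'$, so that $(\bold E-\bold E\theta')+(\bold E\theta'-\bold E(\theta\theta'))=(\bold E-\bold E(\theta\theta'))=\lambda_\bold E(\theta\theta')$, which is the claimed formula $\lambda_\bold E(\theta\theta')=\lambda_\bold E(\theta)\theta'+\lambda_\bold E(\theta')$.

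I do not expect a serious obstacle: the proof is entirely formal once Proposition \ref{diff} is in hand. The only point requiring care is bookkeeping — selecting the correct intermediate term in (iii) so the telescoping closes up, and keeping straight that the cohomological $\theta'$-action must be matched against the extension-level $\theta'$-translation through equivariance. If anything is subtle, it is verifying that the right-action identity $(\bold E\theta)\theta'=\bold E(\theta\theta')$ holds on the nose (not merely up to the chosen representatives), but this is guaranteed by the earlier verification that $\bold E\theta$ is a well-defined right action of $\mathcal S$ on $\mathcal X_\Phi(Q,H)$.
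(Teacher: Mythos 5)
Your proposal is correct and follows essentially the same route as the paper: the paper likewise applies the equivariance of $s$ (Proposition \ref{diff}(i)) to rewrite $\lambda_\be(\theta)\theta'=(\be\theta'-\be\theta\theta')$ and then telescopes, merely packaging the final cancellation as $(a-b)+(b-c)+(c-a)=0$ after expressing $\be$, $\be\theta'$, $\be\theta\theta'$ against a basepoint $\be_0$ rather than invoking (iii) by name. Your identification of the equivariance step as the one doing real work, and of the right-action identity $(\be\theta)\theta'=\be(\theta\theta')$ as the needed background fact, matches the paper's logic exactly.
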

\begin{proof} Let $\bold E_0$ be a base point and $\bold E=\bold E_0a$, $\bold E\theta'=\bold E_0b$, $\bold E\theta\theta'=\bold E_0c$ with $a,b,c\in H^2(Q,zH)$.  Then

$$\begin{aligned}\lambda_\bold E(\theta')+\lambda_\bold E(\theta)\theta'-\lambda_\bold E(\theta\theta')
&=(\bold E-\bold E\theta')+(\bold E-\bold E \theta)\theta'-(\bold E-\bold E\theta\theta')\\
&=(\bold E-\bold E\theta')+(\bold E\theta'-\bold E \theta\theta')+(\bold E\theta\theta'-\bold E) \\
&= (a-b)+(b-c)+(c-a)=0.
\end{aligned}$$\end{proof}

%If $\theta=(\alpha,\beta)$ recall $\rho_{\bold E}(\theta):\mathcal S\to H^2(Q,zH)$ 

%If we choose $\bold E_0=\bold E$ then $\zeta=0$ and $\bold E\,\theta=\bold E_{\bold E}(\theta)$ we have

%\begin{cor} If $\bold E_0$ is any basepoint of $\op{\mathcal{X}_\Phi(Q,H)}$ and $\bold E\equiv\bold E_0\zeta$ then 
%$$\lambda_{\bold E}(\theta)=(\theta^*-\text{Id})\zeta+\lambda_{\bold E_0}(\theta).$$
%\end{cor}
%\begin{proof} It is sufficient to show $\bold E_0(\zeta+\lambda_{\bold E}\theta)\equiv \bold E_0(\lambda_{\bold E_0}\theta+\theta^*\zeta)$ but 
%$$\bold E_0(\zeta+\lambda_{\bold E}\theta)\equiv\bold E\lambda_\bold E\theta\equiv\bold E\theta\equiv(\bold E_0\zeta)\theta\equiv (\bold E_0\theta)\zeta\,\theta\equiv(\bold E_0\lambda_{\bold E_0}\theta)\zeta\theta\equiv\bold E_0(\lambda_{\bold E_0}\theta+\theta^*\zeta).$$\end{proof}

%$(\theta_\bold E(\alpha,\beta)\,\zeta)\bold E_0=\theta_\bold E(\alpha,\beta)\bold E=\theta\,\bold E=\theta(\zeta)\theta\,\bold E_0(\alpha,\beta)=(\theta(\zeta)\,\theta_{\bold E_0}(\alpha,\beta))\bold E_0.$ \end{proof}

%\begin{thm} Suppose the homomorphism $\Phi:Q\to\op{Out}(H)$ lifts to a homomorphism $\varphi:Q\to\op{Aut}(H)$, in particular if $H$ is abelian.  Then $\lambda_\bold E(\theta)=(\theta^*-\text{Id})[E]\in H^2(Q,zH)$. \end{thm}
%\begin{proof} Choose $\bold E_0$ to be the split extension so that $[\bold E_0]=0\in H^2(Q,zH)$ then $\lambda_{\bold E_0}(\theta)=0$ and the result follows from the above corollary.\end{proof}

Let $\op{Aut}(G,H)=\{\gamma\in\op{Aut}G\vv\gamma\vert H:H\to H\}$.  There exists an obvious homomorphism
$\text{res}:\op{Aut}(G,H)\to \mathcal S$ given by $\text{res}(\gamma)=(\gamma\vert H,\gamma')$ where $\gamma'\pi=\pi\gamma$.  Choosing a basepoint $\be_0$ one sees $\lambda_E\theta=0$ if and only if $\be\equiv\be\,\theta$ for $\theta\in\mathcal S$.  Therefore in view of  \ref{MT} the following are equivalent.
\begin{enumerate}\item $\theta$ extends to a mapping $\bold E\to\bold E$.  That is $\theta\in\text{image}\{\text{res}\}$.
%there exists $\gamma:G\to G$ with $i\alpha=\gamma\,i$ and $\pi\,\gamma=\beta\,\pi$.
\item $\theta\in\op{Iso}_\mathcal S(\bold E)$, the isotopy group of $\bold E$.
%\item $\bold E\in\op{Fix}(\theta)$ the fixed point set of $\theta$.
\item $\theta\in\ker \lambda_\be$.\end{enumerate}

\begin{thm}\label{cycle} Let $\bold E\in\EXT_\Phi(Q,H)$. There exists an exact sequence of groups 
$$   0\to \mathcal Z^1(Q,zH)\rar{\mu} \op{Aut}(G,H)\rar{res}\mathcal S\rar{\lambda_\be} H^2(Q,zH)  $$
where $Q$ acts on $zH$ via $\Phi$ and $\mathcal Z^1(Q,zH)$ are the cocycles with respect to this action.  The map $\mu$ is given by $\sigma\mapsto\varphi_\sigma$ where $\varphi_\sigma(g)=\sigma(\pi g)g$ and $\lambda_\be$ is a derivation.
\end{thm}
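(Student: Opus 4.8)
The plan is to establish exactness separately at each of the three nonterminal spots $\mathcal Z^1(Q,zH)$, $\op{Aut}(G,H)$, and $\mathcal S$, recording along the way that $\mu$ is a homomorphism (the fact that $\lambda_\be$ is a derivation has already been proved). Throughout I fix a normalized section $u:Q\to G$ representing $\be$ with $\varphi(q)=c_{u(q)}$, so that for $n\in zH$ the $Q$-action is $q\cdot n=\varphi(q)n=u(q)+n-u(q)$; I use repeatedly that for \emph{any} preimage $g$ of $q$ one has $c_g|zH = q\cdot(-)$, since two preimages differ by an element of $H$ and inner automorphisms of $H$ fix $zH$ pointwise. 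Exactness at $\mathcal S$ is essentially in hand: by the three-way equivalence established just before the statement, $\theta\in\op{image}(\op{res})$ iff $\theta$ extends to a self-map of $\be$, which by Theorem \ref{MT} holds iff $\alpha_*\be\equiv\beta^*\be$, equivalently iff $\be\equiv\be\theta$, i.e. iff $\lambda_\be(\theta)=(\be-\be\theta)=0$. I will also note that the $\gamma$ produced by Theorem \ref{MT} is automatically an isomorphism by the short five lemma, so it genuinely lies in $\op{Aut}(G,H)$.

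For the map $\mu$ and exactness at $\mathcal Z^1(Q,zH)$, I first check that $\varphi_\sigma(g)=\sigma(\pi g)+g$ is a homomorphism precisely when $\sigma$ is a cocycle. Expanding $\varphi_\sigma(g+g')$ and $\varphi_\sigma(g)+\varphi_\sigma(g')$ and using $g+\sigma(\pi g')-g = c_g\,\sigma(\pi g')=(\pi g)\cdot\sigma(\pi g')$ converts the homomorphism identity into $\sigma(\pi g\,\pi g')=\sigma(\pi g)+(\pi g)\cdot\sigma(\pi g')$, which is exactly the cocycle condition; crucially this computation is reversible. Since $\sigma$ is normalized, $\varphi_\sigma|H=\id_H$ and $\pi\varphi_\sigma=\pi$, whence $\op{res}(\varphi_\sigma)=(\id,\id)$; moreover $\varphi_{-\sigma}$ is a two-sided inverse, so $\varphi_\sigma\in\op{Aut}(G,H)$. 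From $\varphi_\sigma\varphi_{\sigma'}=\varphi_{\sigma+\sigma'}$ it follows that $\mu$ is a homomorphism, and $\varphi_\sigma=\id$ forces $\sigma(\pi g)=0$ for all $g$, hence $\sigma=0$ by surjectivity of $\pi$; this is injectivity of $\mu$, i.e. exactness at $\mathcal Z^1(Q,zH)$.

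For exactness at $\op{Aut}(G,H)$, the inclusion $\op{image}(\mu)\subseteq\ker(\op{res})$ was just noted. For the reverse, given $\gamma\in\ker(\op{res})$ (so $\gamma|H=\id$ and $\gamma$ induces $\id_Q$) I define $\sigma$ by $\sigma(\pi g):=\gamma(g)-g$ and show $\gamma=\varphi_\sigma=\mu(\sigma)$ with $\sigma\in\mathcal Z^1(Q,zH)$. That $\gamma(g)-g\in H$ is immediate from $\pi\gamma=\pi$, and well-definedness follows since for $h\in H$ one has $\gamma(g+h)-(g+h)=\gamma(g)+h-h-g=\gamma(g)-g$. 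The cocycle condition then comes for free from the reversible computation of the previous paragraph, because $\gamma=\varphi_\sigma$ is a homomorphism. The one genuinely substantive point, and the step I expect to be the main obstacle, is showing $\sigma(\pi g)\in zH$ rather than merely in $H$. Here I would use $\gamma|H=\id$ together with $\gamma$ a homomorphism to obtain $c_{\gamma(g)}=c_g$ on $H$: for $h\in H$, $c_{\gamma(g)}(h)=\gamma(g)+h-\gamma(g)=\gamma(g+h-g)=\gamma(c_g h)=c_g h$, the last equality because $c_g h\in H$. Hence $c_{\gamma(g)-g}=c_{\gamma(g)}c_{-g}=\id$ on $H$, so $\sigma(\pi g)=\gamma(g)-g$ centralizes $H$; lying in $H$, it lies in $zH$. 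This finishes exactness at $\op{Aut}(G,H)$, and assembling the three exactness statements yields the asserted sequence.
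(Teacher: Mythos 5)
Your proposal is correct, and it follows the same global skeleton as the paper --- exactness at $\mathcal S$ is in both cases just the equivalence $(1)\Leftrightarrow(2)\Leftrightarrow(3)$ recorded before the theorem together with Theorem \ref{MT}, and the only substantive content is the identification $\ker(\op{res})\simeq\mathcal Z^1(Q,zH)$, whose pivotal step in both arguments is proving that $\sigma$ takes values in $zH$ --- but your execution of that identification is genuinely different. The paper passes to the normal form $\bold E_{f,\varphi}$ and invokes Proposition \ref{map}, so that a kernel element is forced to have the shape $\gamma(h,q)=(h+\sigma q,q)$ with $\sigma:Q\to H$ satisfying the homomorphism equation $(*)$; centrality of $\sigma q$ is then extracted by applying $\gamma$ to the relation $(0,q)+(h,1)=(\varphi(q)h,q)$, after which $(*)$ collapses to the cocycle condition, and a final conjugation by the isomorphism $\rho:G\to E_{f,\varphi}$ is needed to recover the stated formula $\varphi_\sigma(g)=\sigma(\pi g)g$ on $G$ itself. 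You instead work intrinsically in $G$: you define $\sigma(\pi g)=\gamma(g)-g$, check well-definedness directly, and prove centrality by showing $c_{\gamma(g)}=c_g$ on $H$, which is exactly the coordinate-free avatar of the paper's computation (there, $\sigma q$ commutes with $\varphi(q)h$ for all $h$). What your route buys: the formula for $\mu$ is the definition rather than the output of a change of coordinates, and you make explicit several points the paper compresses into ``all is clear'' --- that $\mu$ is a homomorphism via $\varphi_\sigma\varphi_{\sigma'}=\varphi_{\sigma+\sigma'}$, that $\varphi_\sigma$ is invertible with inverse $\varphi_{-\sigma}$, and the short-five-lemma observation that the map produced by Theorem \ref{MT} is an automorphism carrying $H$ to $H$, so that it genuinely witnesses $\theta\in\op{image}(\op{res})$. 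What the paper's route buys is economy of machinery: Proposition \ref{map} delivers the homomorphism criterion $(*)$ with no further computation, consistent with the paper's coordinatized treatment of $\alpha_*$ and $\beta^*$ throughout. Both proofs are complete and correct; yours is somewhat more self-contained, the paper's somewhat more systematic.
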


\begin{proof} All is clear except for the identification of the kernel of the map $\op{Aut}G\rar{res}\op{Iso}_\mathcal S(\bold E)\sub\mathcal S$ given by $\gamma\mapsto(\gamma\vert_H,\gamma')$ where $\pi\gamma=\gamma'\pi$. Assume $\bold E=\bold E_{f,\varphi}$.  By \ref{map} 
$\gamma(h,q)=(h+\sigma q,q)$ for some $\sigma:Q\to H$ satisfying
$$ \sigma q+\varphi(q)[h+\sigma q']+f(q,q')=[\varphi(q)h]+f(q,q')+\sigma(qq')\qquad (*). $$
But $(0,q)+(h,1)=(\varphi(q)h,q)$ and so $\gamma(0,q)+\gamma(h,1)=\gamma(\varphi(q)h,q)$ or
$$(\varphi(q)h+\sigma q,q)=(\sigma q,q)+(h,1)=(\sigma q+\varphi(q)h,q).$$
Therefore $\sigma:Q\to zH$ and $(*)$ says $\sigma$ is a cocycle.  This shows the map $\mathcal Z^1(Q,zH)\to\ker\text{res}$ given by $\sigma\mapsto\gamma_\sigma$ where $\gamma_\sigma(h,q)=(h+\sigma(q),q)=(\sigma(q)+h,q)=(\sigma(q),1)+(h,q)$ is onto.  (The second equality follows since $\sigma(q)$ is central.)  It is clearly a monomomorphism.
\par By choosing an appropriate (normalized) section $s:Q\to G$ we obtain an isomorphism $\rho:G\to E_{f,\varphi}$ given by $g=hs(\pi g)\mapsto (h,\pi g)$.  Then
$$ \varphi_\sigma(g)=\rho^{-1}\gamma\,\rho(g)=\rho^{-1}\gamma(h,\pi g)=\rho^{-1}\left(\sigma(\pi g),1)+(h,q)\right)=\sigma(\pi g)g.$$\end{proof}

\begin{remark} All the terms in the above exact sequence except for $\op{Aut}(G,H)$ depend only on $H$, $Q$ and the action of $Q$ on the center of $H$ and not on the extension $\be$.  The image of $\text{res}=\op{Iso}_{\mathcal S}\be$ clearly depends on $\be$. This is well illustrated by the groups $D_4$ and $Q_8$ which have center $H=\Z/2$ (and hence $\op{Aut}(G,H)=\op{Aut}G$ in both cases) and quotient $Q=\Z/2\times\Z/2$. Since $zH$ is a trivial $Q$-module $\mathcal Z^1(Q,zH)=H^1(Q,zH)=(\Z/2)^2$, we have an exact sequence
$$ 0\to \Z/2\times\Z/2\to  \op{Aut}G\to \op{Iso}_{\hat{\mathcal S}}\bold E \to 1.  $$
Now $H^2(Q,zH)\simeq(\Z/2)^3$.  Since $\Phi=1$, $$\mathcal S=\op{Aut}\Z/2\times\op{Aut}(\Z/2\times\Z/2)=GL_2(F_2)=S_3$$ acts on a set of order $8$ representing the extensions of $\Z/2$ by the Klein $4$-group. It is not difficult to see that there are $4$ orbits, two orbits with one element representing $(\Z/2)^3$ and $Q_8$ and two orbits with $3$ elements representing $D_4$ and $\Z/4\times \Z/2$.  Hence we have exact sequences
$$\begin{aligned} 0\to V\to &\op{Aut}D_4\to \Z/2\to 1  \\
0\to V\to&\op{Aut}Q_8\to S_3\to 1\end{aligned} $$
corresponding to the facts that $\op{Aut}D_4\simeq D_4$ and $\op{Aut}Q_8\simeq S_4$.
 \end{remark}

\section{$\op{Out}(G,H)$ and the basic exact sequence}

$\op{Inn}G$ is a normal subgroup of $\op{Aut}(G,H)$. Let $\op{Out}(G,H)=\op{Aut}(G,H)/\op{Inn}G$. 

Consider the map $u=\text{res}\vert_{\op{Inn}G}:\op{Inn}G\to\op{Aut}(G,H)\to\mathcal S$.  The image of this map is $\mathcal B=\{u(c_g)=(c_g\vert H,c_{\pi g})\vv g\in G\}$.

\begin{prop} i) $\mathcal B$ is normal in $\mathcal S$. \par ii) $\mathcal B\sub\op{Iso}_\mathcal S\bold E$. \par iii) $\ker u\simeq (C_GH\cap\pi^{-1}zQ)/zG$.\end{prop}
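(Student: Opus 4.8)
The plan is to handle the three assertions separately, noting that (ii) is already essentially in hand, (iii) is a direct kernel computation, and (i) carries the real content. For (ii), I would invoke the chain of equivalences recorded just before Theorem~\ref{cycle}: the image of $\text{res}:\op{Aut}(G,H)\to\mathcal S$ is exactly $\op{Iso}_\mathcal S\be$. Since conjugation by any $g\in G$ preserves the normal subgroup $H$, we have $\op{Inn}G\sub\op{Aut}(G,H)$, and $u=\text{res}|_{\op{Inn}G}$, so $\mathcal B=u(\op{Inn}G)\sub\text{res}(\op{Aut}(G,H))=\op{Iso}_\mathcal S\be$. No further work is needed.

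For (iii), I would first pin down $\ker u$ inside $\op{Inn}G$. Since $u(c_g)=(c_g|H,c_{\pi g})$, the condition $u(c_g)=\id$ forces $c_g|H=\id_H$, i.e. $g\in C_GH$, together with $c_{\pi g}=\id_Q$, i.e. $\pi g\in zQ$, i.e. $g\in\pi^{-1}(zQ)$. Hence $\ker u=\{c_g\vv g\in C_GH\cap\pi^{-1}(zQ)\}$. Now compose with the canonical surjection $G\to\op{Inn}G$, $g\mapsto c_g$, whose kernel is $zG$. Because $zG$ centralizes everything it lies in $C_GH$, and $\pi(zG)\sub zQ$ gives $zG\sub\pi^{-1}(zQ)$, so $zG\sub C_GH\cap\pi^{-1}(zQ)$. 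Restricting $g\mapsto c_g$ to the subgroup $C_GH\cap\pi^{-1}(zQ)$ has image $\ker u$ and kernel $zG$, and the first isomorphism theorem yields $\ker u\simeq (C_GH\cap\pi^{-1}(zQ))/zG$.

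The substance is (i). Fixing $\theta=(\alpha,\beta)\in\mathcal S$ and $b=u(c_g)=(c_g|H,c_{\pi g})\in\mathcal B$, and using that $\mathcal S$ is a subgroup of $\op{Aut}(H)\times\op{Aut}(Q)$, I would compute the conjugate coordinatewise, $\theta b\theta^{-1}=(\alpha(c_g|H)\alpha^{-1},\,\beta c_{\pi g}\beta^{-1})$. The $Q$-coordinate is routine, $\beta c_{\pi g}\beta^{-1}=c_{\beta(\pi g)}$, and choosing $g'\in G$ with $\pi g'=\beta(\pi g)$ (possible since $\pi$ is onto) rewrites it as $c_{\pi g'}$. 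The crux is to show the $H$-coordinate has the form $c_{g''}|H$ for a suitable preimage $g''$ of $\beta(\pi g)$. Passing to $\op{Out}(H)$, using $\Phi(\pi g)=[c_g|H]$ and the defining relation $\Phi\beta=c_{[\alpha]}\Phi$ of $\mathcal S$, one obtains $[\alpha(c_g|H)\alpha^{-1}]=c_{[\alpha]}\Phi(\pi g)=\Phi(\beta(\pi g))=[c_{g'}|H]$. Thus $\alpha(c_g|H)\alpha^{-1}$ and $c_{g'}|H$ differ by an inner automorphism $c_h$ of $H$, so $\alpha(c_g|H)\alpha^{-1}=c_{h+g'}|H$ with $h\in H$. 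Setting $g''=h+g'$ and noting $\pi g''=\pi g'=\beta(\pi g)$ (as $h\in\ker\pi$), we get $\theta b\theta^{-1}=(c_{g''}|H,c_{\pi g''})=u(c_{g''})\in\mathcal B$. I expect the main obstacle to be exactly this step: converting the $\mathcal S$-compatibility, which is only an identity in $\op{Out}(H)$, back into a genuine conjugation, which succeeds precisely because the inner correction $c_h$ can be absorbed into a new element $g''$ without altering its image in $Q$.
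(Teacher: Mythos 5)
Your proposal is correct and takes essentially the same approach as the paper: in (i) you conjugate coordinatewise, translate the $\mathcal S$-condition into $[\alpha\,c_g\vert H\,\alpha^{-1}]=[c_{g'}\vert H]$ in $\op{Out}(H)$, and correct by an inner automorphism of $H$ (the paper writes the conjugate as the product $u(c_{g'})\,u(c_h)$ where you absorb $h$ into a single element $g''$, a cosmetic difference). Parts (ii) and (iii) likewise match: your (ii) uses the equivalence $\op{image}\{\text{res}\}=\op{Iso}_{\mathcal S}\be$, which is exactly the paper's appeal to Theorem~\ref{MT}, and your (iii) is the paper's one-line kernel computation with the map $g\mapsto c_g$ and the first isomorphism theorem made explicit.
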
 
\begin{proof} i) If $(\alpha,\beta)\in\mathcal S$ then $(\alpha,\beta)(c_g\vert H,c_{\pi g})(\alpha,\beta)^{-1}=(\alpha\,c_g\vert H\alpha^{-1},c_{\beta\pi\,g}).$  Now $(\alpha,\beta)\in\mathcal S$ means $\Phi(\beta\,\pi g)=\alpha\,\Phi(\pi g)\alpha^{-1}$.  But $\Phi(\pi g)=[c_g\vert H]$ and so if $g'\in G$ is a preimage of $\beta\pi\,g$ we have $[c_{g'}\vert H]=[\alpha\,c_g\vert H\alpha^{-1}]$.  Therefore $\alpha\,c_g\vert H\alpha^{-1}=c_{g'}\vert H\,c_h$ for some $h\in H$ and hence
$$ (\alpha\,c_g\vert H\alpha^{-1},c_{\beta\pi g})=(c_{g'}\vert H,c_{\beta\pi,g})(c_h\vert H,\text{id}).$$  
Clearly $(c_h\vert H,\text{id})=u(c_h)\in\mathcal B$ and since $\pi g'=\beta\pi g$, $(c_{g'}\vert H,c_{\beta\pi,g})=u(c_{g'})\in\mathcal B$.  
\par ii) This is just \ref{MT} since $(c_g\vert H,c_g,c_{\pi g}):\bold E\to \bold E$.
\par iii) $c_g\vert H=\text{id}$ if and only if $g\in C_GH$ and $c_{\pi g}=\text{id}$ if and only if $g\in \pi^{-1}zQ$.
\end{proof}

Let $\bar g\in C_GH\cap\pi^{-1}zQ$ and let $\sigma:G\to G$ given by $\sigma_{\bar g}(g)=[\bar g,g]=\bar gg\bar g^{-1}g^{-1}$.

\begin{prop} i) $\sigma_{\bar g}(gh)=\sigma_{\bar g}(g)$ and hence defines a map $\sigma:Q\to G$.
\par ii) $\sigma_{\bar g}(g)\in zH$.
\par iii) $\sigma_{\bar g}:Q\to zH$ is a derivation with respect to the conjugation action of $Q$ on $zH$.\end{prop}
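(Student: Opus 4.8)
The plan is to reduce all three parts to the single commutator identity
$$ [\bar g,\, g_1 g_2] = [\bar g, g_1]\cdot g_1[\bar g, g_2]g_1^{-1}, $$
valid in any group, which expresses that $g\mapsto[\bar g,g]=\sigma_{\bar g}(g)$ is a crossed homomorphism $G\to G$ for the conjugation action. The two hypotheses on $\bar g$ — that $\bar g$ centralizes $H$ and that $\pi\bar g\in zQ$ — are exactly what is needed to cut this crossed homomorphism down to a derivation $Q\to zH$, so each part amounts to applying one hypothesis.

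For (i) I would specialize the identity to $g_2=h\in H$. Since $\bar g\in C_GH$ we have $[\bar g,h]=\bar g h\bar g^{-1}h^{-1}=1$, so the identity collapses to $\sigma_{\bar g}(gh)=\sigma_{\bar g}(g)$. As every element of the coset $gH$ has the form $gh$ and $\pi(gh)=\pi(g)$, the function $\sigma_{\bar g}$ is constant on $\pi$-fibres and hence descends to a well-defined $\sigma\colon Q\to G$.

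For (ii) I would first apply $\pi$: one gets $\pi\sigma_{\bar g}(g)=[\pi\bar g,\pi g]=1$ because $\pi\bar g\in zQ$, so $\sigma_{\bar g}(g)\in\ker\pi=H$. To upgrade this to centrality I would compute the inner automorphism $c_{\sigma_{\bar g}(g)}$ on $H$. Writing $\sigma_{\bar g}(g)=(\bar g g\bar g^{-1})g^{-1}$ and using $c_{\bar g}\vert H=\id$ together with the normality of $H$, one finds $c_{\bar g g\bar g^{-1}}\vert H = c_g\vert H$, whence $c_{\sigma_{\bar g}(g)}\vert H=(c_g\vert H)(c_g\vert H)^{-1}=\id$. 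Thus $\sigma_{\bar g}(g)\in C_GH\cap H=zH$. This centrality check is the one genuinely non-formal step, since it is where both hypotheses and the normality of $H$ are combined.

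For (iii), now that $\sigma$ lands in $zH$, I read the commutator identity on the quotient. Choosing preimages $g_1,g_2$ of $q_1,q_2$, the identity becomes $\sigma(q_1q_2)=\sigma(q_1)\cdot g_1\sigma(q_2)g_1^{-1}$. Because $\sigma(q_2)\in zH$, conjugation by $g_1$ depends only on $\pi(g_1)=q_1$ and coincides with the $\Phi$-action $q_1\cdot\sigma(q_2)$; in additive notation this is precisely the cocycle condition $\sigma(q_1q_2)=\sigma(q_1)+q_1\cdot\sigma(q_2)$, so $\sigma$ is a derivation. The only point to verify is that conjugation by $g_1$ restricted to $zH$ agrees with the action defined by $\Phi$, which is immediate since two preimages of $q_1$ differ by an element of $H$ and $H$ acts trivially on its own center.
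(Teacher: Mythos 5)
Your proposal is correct and is in substance the same as the paper's proof: the paper verifies parts (i)--(iii) by the same three commutator manipulations that you obtain by specializing the crossed-homomorphism identity $[\bar g,g_1g_2]=[\bar g,g_1]\,{}^{g_1}[\bar g,g_2]$ (indeed, the paper's proof of (iii) is literally that identity, and its centrality computation $h\sigma_{\bar g}(g)=\sigma_{\bar g}(g)h$ uses $\bar g\in C_GH$ and the normality of $H$ exactly as your $c_{\bar g g\bar g^{-1}}\vert H=c_g\vert H$ argument does). Packaging the three parts as instances of one identity is a tidy organizational improvement, but it is not a different argument.
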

\begin{proof} i) $\sigma_{\bar g}(gh)=\bar g gh\bar g^{-1}(gh)^{-1}=\bar g g\bar g^{-1}hh^{-1}g^{-1}=\sigma_{\bar g} g$.
\par ii) $\bar g\in \pi^{-1}zQ$ implies $\sigma_{\bar g} g\in H$.  Since $\bar g\in C_GH$ and $h,\,g^{-1}hg\in H$
$$ h\sigma_{\bar g}(g)=\bar ghg\bar g^{-1}g^{-1}=\bar gg(g^{-1}hg)\bar g^{-1}g^{-1}=(\bar gg\bar g^{-1}g^{-1})h=\sigma_{\bar g}(g)h.$$ 
\par iii) If $\pi g=q$, $\pi g'=q'$ then
$$ \sigma_{\bar g}(qq')=\bar g(gg')\bar g^{-1}(gg')^{-1}=\bar gg\bar g^{-1}g^{-1}g\bar gg'\bar g^{-1}g'^{-1}g^{-1}=\sigma_{\bar g}(q){}^q\sigma_{\bar g}(q').$$
\end{proof}
If we define $\sigma:C_G\cap\pi^{-1}zQ/zG\to Z^1(Q,zH)$ by $\sigma(g\,zG)=\sigma_g$ we see easily that we have a commutative diagram

$$\xymatrix{ 1\ar[r]&C_GH\cap zQ/zG\ar[r]^c\ar@{^{(}->}[d]_\sigma& \op{Inn}G\ar[r]^{res}\ar@{^{(}->}[d]&\mathcal B\ar[r]\ar@{^{(}->}[d]&1\\0\ar[r]&Z^1(Q,zH)\ar[r]^\lambda&\op{Aut}(G,H)\ar[r]^{res}&\op{Iso}_{\mathcal S}\bold E\ar[r]&1}.$$

If $\bar B=\text{Image}\,\sigma$, $Z^1(Q,zH)\supseteq\bar B\supseteq B^1(Q,zH)=\sigma(zH\,zG/zG)$ where $B^1(Q,zH)$ are the boundaries and we have an exact sequences
$$\begin{aligned} 0\to \bar H^1(Q,zH)&\rar{\lambda} \op{Out}(G,H)\rar{res}\op{Iso}_{\mathcal S}\bold E\to 1,\\
0\to (C_GH\cap\pi^{-1}zQ)&/zHzG\to H^1(Q,zH)\to \bar H^1(Q,zH)\to 0.\end{aligned}  $$

\begin{remark} If $H$ is centric in $G$ then $C_GH\cap\pi^{-1}zQ=zH$, $zG\sub zH$ and so $\bar H^1(Q,zH)=H^1(Q,zH)$.\end{remark}

$\op{Iso}_{\mathcal S}\bold E/\mathcal B\sub \mathcal S/\mathcal B\equiv \bar{\mathcal S}$.  $\bar{\mathcal S}$ does not necessarily act on $\mathcal X_{\Phi}(Q,H)$ since $\mathcal B$ does not necessarily act trivially.  Let $\mathcal O_{\bold E}$ denote the orbit of $\bold E\in\mathcal X_{\Phi}(Q,H)$ under the right action of $\mathcal S$, and for $g\in G$ let $\theta_g=(c_g\vert H,c_{\pi g})\in\mathcal S$.

\begin{prop} i) $\lambda_{\bold E}(\theta_g\theta)=\lambda_{\bold E}\theta$ for $\theta\in\mathcal S$ and hence defines a map $\lambda_{\bold E}:\bar{\mathcal S}\to H^2(Q,zH)$.
\par ii) $\bar{\mathcal S}$ acts (on the right) on $\mathcal O_{\bold E}$ and the map $\lambda_{\bold E}:\bar{\mathcal S}\to U\sub H^2(Q,zH)$.  $U$ is an $\bar{\mathcal S}$ invariant subgroup of $H^2(Q,zH)$ and $\lambda_\be$ is a derivation with respect to this action.
\par iii) $\op{Iso}_{\bar{\mathcal S}}\bold E=\op{Iso}_{\mathcal S}\bold E/\mathcal B.$
\end{prop}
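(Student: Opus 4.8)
The plan is to derive all three parts formally from three facts already in hand: the derivation identity $\lambda_{\bold E}(\theta\theta')=\lambda_{\bold E}(\theta)\theta'+\lambda_{\bold E}(\theta')$, the inclusion $\mathcal B\subseteq\op{Iso}_\mathcal S\bold E$ together with the normality of $\mathcal B$ in $\mathcal S$, and the equivariance of the difference map $s$ from Proposition \ref{diff}. For (i), I would first note that $\theta_g\in\mathcal B\subseteq\op{Iso}_\mathcal S\bold E$ gives $\bold E\theta_g=\bold E$, so $\lambda_{\bold E}(\theta_g)=(\bold E-\bold E\theta_g)=0$; feeding this into the derivation identity yields $\lambda_{\bold E}(\theta_g\theta)=\lambda_{\bold E}(\theta_g)\theta+\lambda_{\bold E}(\theta)=\lambda_{\bold E}(\theta)$. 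Thus $\lambda_{\bold E}$ is constant on each coset $\mathcal B\theta$, and since $\mathcal B$ is normal these cosets are the elements of $\bar{\mathcal S}$, so $\lambda_{\bold E}$ factors through $\bar{\mathcal S}$.

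For the action in (ii), I would first show $\mathcal B$ acts trivially on the whole orbit $\mathcal O_{\bold E}$, not merely on $\bold E$: given $\bold E\theta\in\mathcal O_{\bold E}$ and $\theta_g\in\mathcal B$, normality gives $\theta\theta_g\theta^{-1}\in\mathcal B\subseteq\op{Iso}_\mathcal S\bold E$, so $(\bold E\theta)\theta_g=\bold E(\theta\theta_g\theta^{-1})\theta=\bold E\theta$. Hence the right $\mathcal S$-action on $\mathcal O_{\bold E}$ descends to a (transitive) right action of $\bar{\mathcal S}$. I would then define $U$ to be the subgroup of $H^2(Q,zH)$ generated by the image of $\lambda_{\bold E}$, equivalently by the differences $(\bold E-\bold E')$ with $\bold E'\in\mathcal O_{\bold E}$; by construction $\lambda_{\bold E}$ takes values in $U$.

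To see that $\bar{\mathcal S}$ acts on $U$ and that $\lambda_{\bold E}$ is a derivation into it, I would check that $\mathcal B$ acts trivially on $U$: for a generator $\lambda_{\bold E}(\theta_1)=(\bold E-\bold E\theta_1)$ and $\theta_g\in\mathcal B$, equivariance of $s$ together with the triviality of $\mathcal B$ on the orbit gives $\lambda_{\bold E}(\theta_1)\theta_g=(\bold E\theta_g-\bold E\theta_1\theta_g)=(\bold E-\bold E\theta_1)=\lambda_{\bold E}(\theta_1)$; since $\theta_g$ acts by an automorphism fixing every generator it fixes $U$ pointwise. That $U$ is $\bar{\mathcal S}$-invariant follows by rewriting the derivation identity as $\lambda_{\bold E}(\theta_1)\theta=\lambda_{\bold E}(\theta_1\theta)-\lambda_{\bold E}(\theta)$, a difference of generators, so the module action carries $U$ into $U$; and the derivation identity for $\lambda_{\bold E}\colon\bar{\mathcal S}\to U$ is inherited verbatim from the one on $\mathcal S$. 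For (iii), the action constructed above is $\bold E(\mathcal B\theta)=\bold E\theta$, so a coset $\mathcal B\theta$ stabilizes $\bold E$ exactly when $\theta\in\op{Iso}_\mathcal S\bold E$; as $\mathcal B\subseteq\op{Iso}_\mathcal S\bold E$ is normal, the stabilizer $\op{Iso}_{\bar{\mathcal S}}\bold E$ is precisely the image $\op{Iso}_\mathcal S\bold E/\mathcal B$.

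The one point that is not purely formal, and the step I expect to be the real obstacle, is the correct definition and good behavior of $U$. The image of $\lambda_{\bold E}$ need not be a subgroup: in the $D_4$ example of the preceding remark the orbit $\mathcal O_{\bold E}$, hence the image, has three elements, and $(\Z/2)^3$ has no subgroup of order three. So $U$ must be taken as the subgroup generated by the image, and the substantive verification is that the $\bar{\mathcal S}$-action descends to $U$; this rests on $\mathcal B$ acting trivially on the entire orbit, and that is exactly where the normality of $\mathcal B$ in $\mathcal S$ is indispensable.
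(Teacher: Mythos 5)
Your proof is correct, and on parts (i) and (iii) it coincides with the paper's: the paper likewise gets (i) from $\lambda_{\bold E}(\theta_g)=0$ (via $\mathcal B\sub\op{Iso}_{\mathcal S}\bold E$) fed into the derivation identity, and treats (iii) as obvious. For the triviality of the $\mathcal B$-action on $\mathcal O_{\bold E}$ in (ii) you and the paper use the same ingredient (normality of $\mathcal B$) in slightly different ways: the paper deduces $\lambda_{\bold E}(\theta\theta_g)=\lambda_{\bold E}(\theta)$, i.e.\ $(\bold E\theta)\theta_g-\bold E=\bold E\theta-\bold E$, and then invokes the simply transitive $H^2$-action (choice of base point) to cancel and conclude $(\bold E\theta)\theta_g=\bold E\theta$; you conjugate $\theta_g$ across $\theta$ and act directly on $\bold E$, which is marginally cleaner but mathematically the same.

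The genuine divergence is the treatment of $U$, and here your instinct is sound. The paper defines $U=\{(\be'-\be'')\vv\be',\be''\in\mathcal O_\be\}$, the set of pairwise differences, and asserts without any argument that this is an $\bar{\mathcal S}$-invariant subgroup; you instead take $U$ to be the subgroup generated by the image of $\lambda_{\bold E}$ (equivalently by the differences, since $\lambda(\theta_1)\theta=\lambda(\theta_1\theta)-\lambda(\theta)$ and $\lambda(\theta)=(\bold E-\bold E\theta)$) and then verify invariance and the triviality of the $\mathcal B$-action on generators. Two comments. First, a small correction to your own discussion: your $D_4$ observation shows the \emph{image} of $\lambda_{\bold E}$ (three elements, containing $0$) is not a subgroup, but the paper's $U$ is the \emph{difference set}, which in that example is $\{0,a,b,a+b\}$ and is a subgroup of order four, so your example does not actually refute the paper's definition — it only shows the image must be enlarged. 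Second, and in your favor: closure of the difference set under addition is \emph{not} a formal consequence of the cocycle identities. For an abstract derivation $\lambda\colon T\to M$ one has $\lambda(s)-\lambda(t)=\lambda(st^{-1})t$, so the difference set is $\{\lambda(u)v\}$, and this can fail to be closed under addition: take $T=\Z$ acting on $M=\Z^2$ through the unipotent matrix $\left(\begin{smallmatrix}1&1\\0&1\end{smallmatrix}\right)$ with $\lambda(t)=(1,0)$; then $\lambda(t^m)=(m,m(m-1)/2)$, the difference set is $\{(m,\,m(m-1)/2+mk)\vv m,k\in\Z\}$, which contains $(1,0)$ but not $(1,0)+(1,0)=(2,0)$. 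So the paper's version of (ii) needs a closure argument it does not supply (or extra structure particular to $\lambda_\be$), whereas your definition makes the statement true by construction and costs nothing downstream, since the subsequent theorem uses only that $\lambda_\be\colon\bar{\mathcal S}\to H^2(Q,zH)$ is a well-defined derivation.
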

\begin{proof} i) $\lambda_{\bold E}$ is a derivation and $\bold E\,\theta_g\equiv\bold E$.  Hence $\lambda_{\bold E}(\theta_g\theta)=\lambda_{\bold E}(\theta_g)\theta+\lambda_{\bold E}(\theta)=\lambda_{\bold E}\theta$.
\par ii) Since $\mathcal B$ is normal in $\mathcal S$ we have $\lambda_{\bold E}(\theta\,\theta_g)=\lambda_{\bold E}(\theta)$.  I.e, for all $\theta,\,\theta_g\in \mathcal S$
$$\bold E\theta-\bold E=\bold E(\theta\,\theta_g)-\bold E=(\bold E\theta)\theta_g-\bold E.$$.  But by choosing a base point in $\mathcal X_\Phi(Q,H)$ this easily implies $(\bold E\theta)\theta_g=\bold E\theta$, that is $\theta_g$ acts trivally on $\mathcal O_{\bold E}$.  $U=\{(\be'-\be'')\vv \be',\be''\in \mathcal O_{\be}\}$.  That the induced map $\lambda_E:\bar{\mathcal S}\to H^2(Q,zH)$ is a derivation is obvious.
\par iii) This is also obvious.\end{proof}

This gives the following.

\begin{thm}\label{basic} Let $\bar{\mathcal S}=\mathcal S/\mathcal B$ then $\bar{\mathcal S}$ acts on $\mathcal O_{\bold E}\sub \mathcal X_\Phi(Q,zH)$ and there exists an exact sequence
$$  0\to \bar H^1(Q,zH)\rar{\mu}\op{Out}(G,H)\rar{res}\bar{\mathcal S}\rar{\lambda_{\bold E}}H^2(Q,zH).  $$
$image\{res\}=\op{Iso}_{\bar{\mathcal S}}\bold E$ and $\bar H^1(Q,zH)\simeq H^1(Q,zH)/V$ with $V\simeq (C_GH\cap \pi^{-1}zQ)/zHzG$.

\end{thm}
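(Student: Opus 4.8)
The plan is to splice two short exact sequences that are essentially already in hand. The first is obtained by applying the snake lemma to the displayed commutative diagram of short exact sequences with injective vertical maps: since all three vertical kernels vanish, the cokernel row
$$0\to\bar H^1(Q,zH)\rar{\mu}\op{Out}(G,H)\rar{\op{res}}\op{Iso}_{\bar{\mathcal S}}\bold E\to 0$$
is exact, where $\bar H^1(Q,zH)=\mathcal Z^1(Q,zH)/\bar B$, the induced map $\mu$ is injective, and the right-hand cokernel is $\op{Iso}_{\mathcal S}\bold E/\mathcal B=\op{Iso}_{\bar{\mathcal S}}\bold E$ by part iii) of the preceding proposition. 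This already gives exactness at $\bar H^1(Q,zH)$ and at $\op{Out}(G,H)$, and identifies $\op{image}\{\op{res}\}=\op{Iso}_{\bar{\mathcal S}}\bold E$. (It also supersedes the first displayed short exact sequence, whose right-hand term should read $\op{Iso}_{\bar{\mathcal S}}\bold E$ rather than $\op{Iso}_{\mathcal S}\bold E$, since $\op{res}$ carries $\op{Inn}G$ onto $\mathcal B$.)

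It remains to extend the sequence one step to the right by $\lambda_{\bold E}$ and to check exactness at $\bar{\mathcal S}$. By parts i) and ii) of the preceding proposition, $\lambda_{\bold E}$ is constant on $\mathcal B$-cosets and descends to a derivation $\lambda_{\bold E}:\bar{\mathcal S}\to H^2(Q,zH)$. The content of the splice is the identity $\ker\lambda_{\bold E}=\op{Iso}_{\bar{\mathcal S}}\bold E$: for $\theta\in\mathcal S$ one has $\lambda_{\bold E}(\theta)=(\bold E-\bold E\theta)=0$ exactly when $\bold E\equiv\bold E\theta$, i.e. when $\theta\in\op{Iso}_{\mathcal S}\bold E$ (the equivalence recorded just before Theorem \ref{cycle} via Theorem \ref{MT}); passing to cosets mod $\mathcal B$ this reads $\ker\lambda_{\bold E}=\op{Iso}_{\mathcal S}\bold E/\mathcal B=\op{Iso}_{\bar{\mathcal S}}\bold E$. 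Since this is precisely $\op{image}\{\op{res}\}$, the four-term sequence is exact at $\bar{\mathcal S}$.

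Finally, the isomorphism $\bar H^1(Q,zH)\simeq H^1(Q,zH)/V$ is the content of the second displayed short exact sequence: from $B^1(Q,zH)\sub\bar B\sub\mathcal Z^1(Q,zH)$ the third isomorphism theorem gives $\mathcal Z^1/\bar B\simeq(\mathcal Z^1/B^1)/(\bar B/B^1)=H^1(Q,zH)/V$ with $V=\bar B/B^1\simeq(C_GH\cap\pi^{-1}zQ)/zHzG$. I expect the only delicate point to be the snake-lemma bookkeeping of the first paragraph — specifically, verifying that the left cokernel is exactly $\mathcal Z^1/\bar B$, which amounts to checking that a cocycle $\sigma$ has $\mu(\sigma)$ inner if and only if $\sigma\in\bar B$; this is exactly the commutativity of the left square of the displayed diagram ($\lambda\sigma=c$ on $C_GH\cap zQ/zG$) together with the injectivity of $\mu$ from Theorem \ref{cycle}, and it is what keeps the two independent quotients, by $\op{Inn}G$ and by $\mathcal B$, from colliding.
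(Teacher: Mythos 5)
Your proof is correct and follows essentially the paper's own route: the paper assembles Theorem \ref{basic} by dividing the exact sequence of Theorem \ref{cycle} by $1\to\mathcal L\to\op{Inn}G\rar{res}\mathcal B\to 1$ (as stated explicitly at the start of Section 7) together with the preceding proposition descending $\lambda_{\bold E}$ to $\bar{\mathcal S}$ with $\op{Iso}_{\bar{\mathcal S}}\bold E=\op{Iso}_{\mathcal S}\bold E/\mathcal B$, which is exactly the splice you perform. Your careful cokernel chase (in particular verifying that $\mu(\sigma)$ inner forces $\sigma\in\bar B$, which the paper leaves implicit), the identification $\ker\lambda_{\bold E}=\op{Iso}_{\bar{\mathcal S}}\bold E$ via Theorem \ref{MT}, and your correction of the right-hand term of the paper's first displayed sequence from $\op{Iso}_{\mathcal S}\bold E$ to $\op{Iso}_{\bar{\mathcal S}}\bold E$ only make explicit details the paper passes over.
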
 

\section{Decomposition of $\bar{\mathcal S}$}

\begin{defn} $\op{Aut}_\Phi Q=\{\beta\in\op{Aut}Q\vv \Phi\beta=\Phi\}$.
Identify $\beta\in\op{Aut}_\Phi Q$ with $(1,\beta)\in\mathcal S$.\end{defn}

If $(\alpha,\beta)\in\mathcal S$ then $[\alpha\Phi(q)\alpha^{-1}]=\Phi(\beta q)$ and hence $[\alpha]\in N_{\op{Out}H}(\Phi Q)$ and it is clear we have a homomorphism $p:\mathcal S\to N_{\op{Out}H}(\Phi Q)$ given by $p(\alpha,\beta)=[\alpha]$.

\begin{lemma} i) $p(\mathcal B)=\Phi(Q)$ and hence we have a homomorphism 
$$p:\bar{\mathcal S}\to  N_{\op{Out}H}(\Phi Q)/\Phi Q.$$
\par ii) $\ker p=\mathcal B\,\op{Aut}_\Phi (Q)/\mathcal B\simeq \op{Aut}_\Phi Q/(\op{Aut}_\Phi Q\cap\mathcal B).$\end{lemma}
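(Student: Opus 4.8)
The plan is to handle the two assertions in turn, with the real work concentrated in part (ii).

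For part (i), I would evaluate $p$ directly on the generators of $\mathcal B$. Since $\Phi(\pi g)$ is by definition the class $[c_g\vert H]$ of conjugation by any preimage of $\pi g$, the typical element $u(c_g)=(c_g\vert H,c_{\pi g})$ of $\mathcal B$ has $p$-image $p(u(c_g))=[c_g\vert H]=\Phi(\pi g)$. As $g$ ranges over $G$ the element $\pi g$ ranges over all of $Q$ by surjectivity of $\pi$, so $p(\mathcal B)=\{\Phi(q):q\in Q\}=\Phi(Q)$, which is the first claim. Because $\Phi Q$ is normal in its normalizer $N_{\op{Out}H}(\Phi Q)$, and $\mathcal B$ is normal in $\mathcal S$ (established earlier), the composite $\mathcal S\rar{p}N_{\op{Out}H}(\Phi Q)\to N_{\op{Out}H}(\Phi Q)/\Phi Q$ is a homomorphism annihilating $\mathcal B$, hence descends to the claimed homomorphism $\bar p:\bar{\mathcal S}\to N_{\op{Out}H}(\Phi Q)/\Phi Q$.

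For part (ii), the kernel of $\bar p$ is $\mathcal K/\mathcal B$, where $\mathcal K=\{(\alpha,\beta)\in\mathcal S:[\alpha]\in\Phi Q\}$ is the full preimage in $\mathcal S$ of $\Phi Q$. The substance of the proof is the identity $\mathcal K=\mathcal B\,\op{Aut}_\Phi Q$, where $\op{Aut}_\Phi Q$ is viewed inside $\mathcal S$ via $\beta\mapsto(1,\beta)$. One inclusion is immediate: any $(c_g\vert H,c_{\pi g})\in\mathcal B$ has $p$-image $\Phi(\pi g)\in\Phi Q$, any $(1,\beta')$ with $\beta'\in\op{Aut}_\Phi Q$ has $p$-image $[1]$, and $p$ is a homomorphism, so products lie in $\mathcal K$; thus $\mathcal B\,\op{Aut}_\Phi Q\sub\mathcal K$.

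The reverse inclusion is the main obstacle and the only step that uses the internal structure of the extension. Given $(\alpha,\beta)\in\mathcal K$, I would write $[\alpha]=\Phi(q_0)$ for some $q_0\in Q$ and choose a preimage $g_0\in G$ of $q_0$, so that $[\alpha]=[c_{g_0}\vert H]$. Then $\alpha$ and $c_{g_0}\vert H$ differ by an inner automorphism of $H$, which is itself the restriction of conjugation by some $i(h)$ with $h\in H$; absorbing this factor, I can choose $g\in G$ with $\alpha=c_g\vert H$ \emph{exactly}. Setting $b=(c_g\vert H,c_{\pi g})\in\mathcal B$ and forming $b^{-1}(\alpha,\beta)=(1,\,c_{\pi g}^{-1}\beta)$, the first coordinate is the identity precisely because $\alpha=c_g\vert H$. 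Since $\mathcal S$ is a group containing both $b$ and $(\alpha,\beta)$, this element lies in $\mathcal S$; but membership of $(1,\beta')$ in $\mathcal S$ forces $\Phi\beta'=c_{[1]}\Phi=\Phi$, i.e. $\beta'=c_{\pi g}^{-1}\beta\in\op{Aut}_\Phi Q$. Hence $(\alpha,\beta)=b(1,\beta')\in\mathcal B\,\op{Aut}_\Phi Q$, completing $\mathcal K=\mathcal B\,\op{Aut}_\Phi Q$ and giving $\ker\bar p=\mathcal B\,\op{Aut}_\Phi Q/\mathcal B$. The final isomorphism $\mathcal B\,\op{Aut}_\Phi Q/\mathcal B\simeq\op{Aut}_\Phi Q/(\op{Aut}_\Phi Q\cap\mathcal B)$ is then the second isomorphism theorem, legitimate because $\mathcal B$ is normal in $\mathcal S$ and hence in the subgroup $\mathcal B\,\op{Aut}_\Phi Q$. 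The delicate point to get right is the simultaneous matching of both coordinates: one must correct $\alpha$ into a genuine restriction $c_g\vert H$ of conjugation in $G$ whose quotient component is the matching $c_{\pi g}$, so that subtracting the corresponding element of $\mathcal B$ lands exactly in the $\op{Aut}_\Phi Q$ factor.
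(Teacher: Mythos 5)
Your proof is correct and follows essentially the same route as the paper: the key step in both is to absorb the inner-automorphism discrepancy into the conjugating element, producing $\bar g\in G$ with $\alpha=c_{\bar g}\vert H$ exactly, and then to factor $(\alpha,\beta)=(c_{\bar g}\vert H,c_{\pi\bar g})(\mathrm{id},c_{\pi\bar g}^{-1}\beta)\in\mathcal B\,\op{Aut}_\Phi Q$, with the second factor landing in $\op{Aut}_\Phi Q$ because $(\mathrm{id},\beta')\in\mathcal S$ forces $\Phi\beta'=\Phi$. Your write-up merely makes explicit the details the paper leaves implicit (the descent of $p$ via normality of $\Phi Q$ in its normalizer, and the second isomorphism theorem for the final identification), so no substantive difference.
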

\begin{proof} i) $\Phi(Q)=\{[c_g\vert H]\vv g\in G\}=p(\mathcal B)$.
\par ii) If $[\alpha]=[c_g\vert H]$ then $\alpha=c_g\vert H\,c_h=c_{\bar g}\vert H$ for some $\bar g\in G$.  Then 
$$(\alpha,\beta)=(c_{\bar g}\vert H,c_{\pi\bar g})(\text{id},c_{\pi\bar g}^{-1}\beta)\in\mathcal B
\op{Aut}_\Phi Q.$$
The converse is obvious.\end{proof}

\begin{remark} i) If $H$ is centric in $G$ then $\op{Aut}_\Phi Q=\{\text{id}\}$ since $\Phi(\beta\,q)=\Phi(q)$ implies $\beta\,q=q$ since centric is equivalent to $\Phi$ being a monomorphism.  Hence in the centric case $p$ is a monomorphism.
\par ii) More generally, $p$ is a monomorphism if and only if $\op{Aut}_\Phi Q\sub\mathcal B$ if and only $\Phi\beta=\Phi$ implies $\beta=c_{\pi g}$ for some $g\in C_GH$.\end{remark}

In order to determine the image of $p$ let $Q'=Q/\ker\Phi$ with natural projection $\tau:Q\to Q'$.  $\Phi$ defines a monomorphism $\Phi':Q'\to\op{Out}H$ with $\Phi'\tau=\Phi$. Now $\Phi'(Q')=\Phi(Q)$ and if $[\alpha]\in N_{\op{Out}H}(\Phi Q)$ then there exist a (unique) homomorphism $\beta':Q'\to Q'$ with $[\alpha\Phi'(q')\alpha^{-1}]=\Phi'(\beta'q')$ for all $q'\in Q'$.  If $[\alpha]\in\text{image}\,p$ then there exists $\beta:Q\to Q$ inducing $\beta'$, i.e., with $\tau\beta=\beta'\tau$.  Conversely if $\beta$ exists then for all $q\in Q$ we have 
$$[\alpha\Phi q\alpha]=[\alpha\Phi'(\tau q)\alpha^{-1}]=\Phi'(\beta'\tau q)=\Phi'(\tau\beta q)=\Phi(\beta q)$$
and $(\alpha,\beta)\in\mathcal S$.  If $[\alpha]\in N_{\op{Out}H}(\Phi Q)$ determines the map $\beta':Q'\to Q'$ and $(c_g\vert H,c_{\pi g})\in \mathcal B$ then $[\alpha\,c_g\vert H]$ determines the map $\beta'c_{\pi g}:Q'\to Q'$.

\begin{lemma} $\beta'$ lifts to a map $Q\to Q$ if and only if $\beta'c_{\pi g}$ lifts to a map $Q\to Q$.\end{lemma}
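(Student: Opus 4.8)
The plan is to show that the obstruction to lifting $\beta'$ along $\tau:Q\to Q'$ is unchanged when $\beta'$ is replaced by $\beta'c_{\pi g}$, by exhibiting a lift of $c_{\pi g}$ itself and then using that lifts compose. The key observation is that $c_{\pi g}:Q'\to Q'$ is conjugation by the element $\pi g\in Q$ (more precisely by its image $\tau(\pi g)$ in $Q'$), and conjugation by an element of the quotient $Q'$ always lifts: the inner automorphism $c_{\pi g}$ of $Q$ itself descends to $c_{\tau(\pi g)}$ on $Q'$, so that $\tau\circ c_{\pi g}=c_{\tau(\pi g)}\circ\tau = (c_{\pi g} \text{ on } Q')\circ\tau$. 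In other words $c_{\pi g}:Q\to Q$ is a canonical lift of the induced conjugation on $Q'$.

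\emph{First} I would make precise what ``$\beta'c_{\pi g}$'' means: by the discussion preceding the lemma, $[\alpha]$ determines $\beta'$ via $[\alpha\,\Phi'(q')\alpha^{-1}]=\Phi'(\beta'q')$, and $[\alpha\,c_g|H]$ determines the composite $\beta'\circ(c_{\pi g}\text{ on }Q')$, where $c_{\pi g}$ on $Q'$ is conjugation by $\tau(\pi g)$. \emph{Then} the forward implication proceeds as follows. Suppose $\beta:Q\to Q$ lifts $\beta'$, i.e. $\tau\beta=\beta'\tau$. Since conjugation by $\pi g$ on $Q$ is a genuine automorphism of $Q$ lifting conjugation by $\tau(\pi g)$ on $Q'$, the composite $\beta\circ c_{\pi g}:Q\to Q$ satisfies
$$\tau(\beta\, c_{\pi g})=(\tau\beta)c_{\pi g}=(\beta'\tau)c_{\pi g}=\beta'(c_{\tau(\pi g)}\tau)=(\beta' c_{\tau(\pi g)})\tau,$$
so $\beta\, c_{\pi g}$ is a lift of $\beta'c_{\pi g}$. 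The converse is symmetric: if $\delta:Q\to Q$ lifts $\beta'c_{\pi g}$, then $\delta\, c_{\pi g}^{-1}$ lifts $\beta'$, using that $c_{\pi g}^{-1}=c_{(\pi g)^{-1}}$ is again an inner automorphism of $Q$ covering $c_{\tau(\pi g)}^{-1}$ on $Q'$.

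The one point requiring care is the compatibility $\tau\,c_{\pi g}=c_{\tau(\pi g)}\,\tau$, i.e. that $\tau:Q\to Q'$ intertwines conjugation upstairs with conjugation downstairs; this is immediate because $\tau$ is a surjective homomorphism, so $\tau(x\,(\pi g)\,x^{-1}\cdots)$ unwinds directly. \emph{The main obstacle}, such as it is, is purely notational: one must keep straight that $c_{\pi g}$ is being used in three roles—as an inner automorphism of $Q$, as the induced conjugation on $Q'$, and as the factor appearing in the element $\theta_g=(c_g|H,c_{\pi g})\in\mathcal B$—and verify that these are genuinely compatible under $\tau$. Once that bookkeeping is in place, the lemma reduces to the trivial fact that multiplying a lifting problem by a map that \emph{always} lifts does not change solvability, and composition of lifts over the fixed surjection $\tau$ gives the equivalence in both directions.
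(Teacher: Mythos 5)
Your proposal is correct and matches the paper's argument in essence: both proofs rest on the observation that $c_{\pi g}$ on $Q'$ has the canonical lift $c_{\pi g}$ on $Q$ (you verify $\tau\,c_{\pi g}=c_{\tau(\pi g)}\,\tau$ directly from normality of $\ker\Phi$, where the paper instead reads it off a commutative diagram identifying $1\to\ker\Phi\to Q\to Q'\to 1$ with $1\to HC_GH/H\to G/H\to G/HC_GH\to 1$), after which composing with $c_{\pi g}$ and $c_{(\pi g)^{-1}}$ gives both implications. Your explicit two-directional composition spells out what the paper compresses into ``and the result follows,'' but it is the same proof.
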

\begin{proof} The map $\pi:G\to Q$ induces the following commutative diagram
$$\begin{CD}   1 \to HC_GH/H @>>> G/H @>>> G/HC_GH\to 1  \\
         @V\pi V\simeq V   @V\pi V\simeq V  @V\pi V\simeq V &\\
1\to \ker\Phi@>>> Q@>\tau>> Q'\to 1.\end{CD}$$
Therefore $c_{\pi g}:Q'\to Q'$ lifts to $c_{\pi g}:Q\to Q$ and the result follows.
\end{proof}

If $\ker\Phi$ is abelian there is by \cite{R} a well defined cohomolgy class $\Delta(\beta ')\in H^2(Q,\ker\Phi)$ such that $\beta'$ lifts to a map $\beta:Q\to Q$ if and only if $\Delta(\beta ')$ is the base point of $H^2(Q,\ker\Phi)$.  Hence

\begin{prop} If $\ker\Phi$ is abelian there exists an exact sequence
$$    \bar{\mathcal S}\rar{p}N_{\op{Out}H}(\Phi Q)/\Phi Q\rar{\Delta}H^2(Q,\ker \Phi).$$
\end{prop}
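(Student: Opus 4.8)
The plan is to assemble the three ingredients prepared in the paragraphs immediately preceding the statement, so that the only real work is to see that the obstruction $\Delta$ descends to the quotient by $\Phi Q$ and that the image of $p$ is exactly the locus where $\Delta$ vanishes. I would read the sequence as one of pointed sets, so that exactness at the middle term means $\op{image}(p)=\Delta^{-1}(0)$, where $0$ denotes the basepoint of $H^2(Q,\ker\Phi)$.

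First I would pin down $\op{image}(p)$. Recall that $p\colon\bar{\mathcal S}\to N_{\op{Out}H}(\Phi Q)/\Phi Q$ is already known to be a homomorphism. A coset $[\alpha]\,\Phi Q$ lies in $\op{image}(p)$ exactly when $\Phi\beta=c_{[\alpha]}\Phi$ for some $(\alpha,\beta)\in\mathcal S$. Since $\Phi'$ is injective and $[\alpha]\in N_{\op{Out}H}(\Phi Q)$, this forces a unique automorphism $\beta'$ of $Q'=Q/\ker\Phi$ with $\Phi'(\beta'q')=[\alpha]\,\Phi'(q')\,[\alpha]^{-1}$, and the computation in the ``conversely'' paragraph above shows that some $\beta$ with $(\alpha,\beta)\in\mathcal S$ exists if and only if $\beta'$ lifts through $\tau\colon Q\to Q'$, i.e. $\tau\beta=\beta'\tau$. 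Thus $[\alpha]\,\Phi Q\in\op{image}(p)$ if and only if $\beta'$ lifts to $Q$. Next I would invoke \cite{R}: since $\ker\Phi$ is abelian, $\beta'$ lifts if and only if $\Delta(\beta')$ is the basepoint of $H^2(Q,\ker\Phi)$. Combined with the previous step this gives $\op{image}(p)=\Delta^{-1}(0)$, once both sides are read inside $N_{\op{Out}H}(\Phi Q)/\Phi Q$, which is the assertion.

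The step I expect to be the main obstacle is precisely this passage to the quotient: for $\Delta$ to make sense on $N_{\op{Out}H}(\Phi Q)/\Phi Q$ one must check that it is unchanged — or at least that its zero-fiber is preserved — when $[\alpha]$ is replaced by $[\alpha\,c_g\vert H]$ with $(c_g\vert H,c_{\pi g})\in\mathcal B$, a replacement that turns $\beta'$ into $\beta'c_{\pi g}$. The preceding Lemma is built for exactly this: using the diagram that lifts $c_{\pi g}\colon Q'\to Q'$ to $c_{\pi g}\colon Q\to Q$ through $\pi\colon G/H\to Q$, it shows that $\beta'$ lifts if and only if $\beta'c_{\pi g}$ lifts, so $\Delta^{-1}(0)$ is $\Phi Q$-invariant and descends to the quotient. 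This invariance of the zero-fiber is all that the pointed-set exactness requires.

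Finally I would clear up one point of hygiene hidden in the first step: membership in $\mathcal S$ demands $\beta\in\op{Aut}Q$, not merely a lifting homomorphism. Since any homomorphic lift of $\beta'$ carries $\ker\Phi$ into itself and induces the automorphism $\beta'$ on $Q'$, I would confirm against the precise form of \cite{R} that the obstruction there governs lifting the automorphism $\beta'$ to an automorphism of $Q$ (equivalently, that such a lift may be taken bijective, whence $\beta$ is an automorphism by the five lemma). With this settled, the two characterizations of $\op{image}(p)$ coincide and the identifications of the second paragraph yield exactness at $N_{\op{Out}H}(\Phi Q)/\Phi Q$.
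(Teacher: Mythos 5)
Your proof is correct and follows essentially the same route as the paper, whose own proof is just the word ``Hence'' applied to exactly the three ingredients you assemble: the identification of $\op{image}(p)$ with the cosets $[\alpha]\Phi Q$ whose induced $\beta'$ lifts through $\tau$, the preceding Lemma showing the zero-fiber is unchanged when $\beta'$ is replaced by $\beta'c_{\pi g}$, and the obstruction class $\Delta$ of \cite{R}. Your explicit handling of pointed-set exactness and of the homomorphism-versus-automorphism point for the lift $\beta$ (which the paper's definition of $\mathcal S$ silently requires) makes precise details the paper leaves implicit, but it is the same argument.
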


\begin{remark} In the centric case, $C_GH\sub H$, we have from remarks 3 and 4, i) $H^1(Q,zH)=\bar H^1(Q,zH)$ and  ii) $\ker p\simeq \op{Aut}_\Phi Q/(\op{Aut}_\Phi Q\cap\mathcal B)=\{\id\}$.  Clearly $H^2(Q,\ker\Phi)=\{*\}$ and so the exact sequence \ref{basic} becomes the exact sequence of \cite O
$$ 0\to H^1(Q,zH)\rar{\mu}\op{Aut}(G,H)\rar{res}N_{\op{Out}H}(\Phi Q)/\Phi Q\rar{\lambda_\be}H^2(Q,zH)$$
and with $\lambda_\be$ a derivation.\end{remark}

\section{Solvability of $\op{Aut}(G,H)$}

\begin{prop}\label{presolv} If $H$, $\op{Aut}_\Phi Q$ and $N_{\op{Out}H}(\Phi Q)$ are solvable so is $\mathcal S$.  Conversely if $\mathcal S$ is solvable, then $H$, $\op{Aut}_\Phi Q$ and $C_{\op{Out}H}(\Phi Q)$ are solvable.\end{prop}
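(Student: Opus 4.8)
The plan is to analyze the homomorphism $p:\mathcal S\to N_{\op{Out}H}(\Phi Q)$, $p(\alpha,\beta)=[\alpha]$, introduced above, and to read off both implications from the short exact sequence
$$ 1\to \ker p\to \mathcal S\rar{p}\op{image}(p)\to 1, $$
using repeatedly that subgroups, quotients and extensions of solvable groups are solvable.

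First I would identify $\ker p$ explicitly. A pair $(\alpha,\beta)\in\mathcal S$ lies in $\ker p$ precisely when $[\alpha]=1$, i.e. $\alpha\in\op{Inn}H$; and since the defining relation of $\mathcal S$ then reads $\Phi\beta=c_{[\alpha]}\Phi=\Phi$, this forces $\beta\in\op{Aut}_\Phi Q$, while conversely every such pair lies in $\mathcal S$. As the group law on $\mathcal S$ is componentwise, the two subgroups $\{(c_n,\id)\}$ and $\{(\id,\beta)\}$ commute, meet trivially and generate $\ker p$, giving a direct product
$$ \ker p\;\cong\;\op{Inn}H\times\op{Aut}_\Phi Q\;\cong\;(H/zH)\times\op{Aut}_\Phi Q. $$
I would also record the two containments
$$ C_{\op{Out}H}(\Phi Q)\;\subseteq\;\op{image}(p)\;\subseteq\;N_{\op{Out}H}(\Phi Q): $$
the right one is the observation preceding the definition of $p$, while for the left one, if $[\alpha]$ centralizes $\Phi Q$ then $c_{[\alpha]}\Phi=\Phi$, so $(\alpha,\id)\in\mathcal S$ and $p(\alpha,\id)=[\alpha]$.

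For the forward implication, $H$ solvable makes $H/zH=\op{Inn}H$ solvable, and together with $\op{Aut}_\Phi Q$ solvable the direct product $\ker p$ is solvable; since $\op{image}(p)$ is a subgroup of the solvable group $N_{\op{Out}H}(\Phi Q)$ it too is solvable, whence the extension $\mathcal S$ is solvable. For the converse, solvability of $\mathcal S$ passes to the subgroup $\ker p$, hence to its direct factors $\op{Inn}H=H/zH$ and $\op{Aut}_\Phi Q$; since $zH$ is abelian, $H$ is then an extension of the abelian group $zH$ by the solvable group $H/zH$ and is therefore solvable. Finally $\op{image}(p)=\mathcal S/\ker p$ is a quotient of $\mathcal S$, hence solvable, and $C_{\op{Out}H}(\Phi Q)$ is a subgroup of it, hence solvable.

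The one genuinely load-bearing point, and the place where care is needed, is the exact identification of $\ker p$ as the direct product above together with the containment $C_{\op{Out}H}(\Phi Q)\subseteq\op{image}(p)\subseteq N_{\op{Out}H}(\Phi Q)$. These two containments can be proper, which is precisely why the hypothesis in the forward direction is solvability of the \emph{larger} group $N_{\op{Out}H}(\Phi Q)$ while the conclusion in the converse yields only solvability of the \emph{smaller} group $C_{\op{Out}H}(\Phi Q)$; everything else is the standard closure of solvability under subgroups, quotients and extensions, with the lone extra ingredient being that $zH$ is abelian so that solvability of $H/zH$ suffices to recover solvability of $H$.
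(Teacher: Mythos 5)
Your proof is correct and takes essentially the same route as the paper's: both analyze the homomorphism $p:\mathcal S\to N_{\op{Out}H}(\Phi Q)$, $p(\alpha,\beta)=[\alpha]$, identify $\ker p$ as built from $\mathcal U=\{(c_h,\id)\}\cong H/zH$ and $\op{Aut}_\Phi Q$, and then apply closure of solvability under subgroups, quotients and extensions. Your only refinements are making the internal direct product decomposition $\ker p\cong\op{Inn}H\times\op{Aut}_\Phi Q$ explicit and extracting $C_{\op{Out}H}(\Phi Q)$ as a subgroup of $\op{image}(p)=\mathcal S/\ker p$, where the paper instead lifts it into $\mathcal S$ as the set of pairs $(\alpha,\id)$.
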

\begin{proof} If $p: \mathcal S\to N_{\op{Out}H}(\Phi Q)$ is given by $p(\alpha,\beta)=[\alpha]$, then $\ker p=\{(c_h,\beta)\vv h\in H\}=\mathcal U\op{Aut}_\Phi Q$ where $\mathcal U=\{(c_h,\id)\vv h\in H\}$ is a normal subgroup of $\mathcal S$.  Since $\mathcal U$ is solvable if $H$ is, we see
from the hypothesis, $\mathcal S$ is solvable if and only if $\op{Aut}_\Phi Q$ is.

Conversely if $\mathcal S$ is solvable then so is $\mathcal I=\{(c_h,\id)\vv h\in H\}\sub\mathcal S$ and therfore also $H$ since $H/zH\simeq\mathcal I$.  Clearly $C_{\op{Out}H}(\Phi Q)\sub\mathcal S$ as the set $\{(\alpha,\id)\}$.
\end{proof}

\begin{thm}\label{solv} Let $1\to H\to G\to Q\to 1$ be an exact of groups with associated homomorphism $\Phi:Q\to\op{Out} H$.  Suppose 1) $H$ is solvable, 2) $H$ is characteristic in $G$. 3) $N_{\op{out}H}(\Phi Q)$, the normalizer of $\Phi(Q)\sub\op{Out} H$, is solvable, 4) $\op{Aut}(\ker\Phi)$ is solvable.  Then $\op{Aut}(G,H)$ is solvable. 
\par For each condition there exists an extension where the remaining three conditions are true and $\op{Aut}G$ is not solvable and for each condition except the first there exists an extension where exactly one of the remaining conditions is false and $\op{Aut}G$ is solvable. \end{thm}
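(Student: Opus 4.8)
The plan is to derive the positive statement from the exact sequences already in hand, and to settle the two sharpness assertions with explicit small extensions. For the positive direction I would first invoke hypothesis (2): since $H$ is characteristic every automorphism of $G$ restricts to $H$, so $\op{Aut}(G,H)=\op{Aut}G$ and it is this group whose solvability we must establish. Theorem \ref{cycle} places $\op{Aut}(G,H)$ in a short exact sequence $0\to\mathcal Z^1(Q,zH)\rar{\mu}\op{Aut}(G,H)\rar{res}\op{Iso}_{\mathcal S}\be\to 1$, and since $zH$ is abelian the cocycle group $\mathcal Z^1(Q,zH)$ is abelian, hence solvable. Thus $\op{Aut}(G,H)$ is solvable once $\op{Iso}_{\mathcal S}\be\sub\mathcal S$ is, for which it suffices that $\mathcal S$ be solvable. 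By Proposition \ref{presolv} this holds provided $H$, $N_{\op{Out}H}(\Phi Q)$ and $\op{Aut}_\Phi Q$ are solvable, and the first two are exactly hypotheses (1) and (3).

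The only remaining input is the solvability of $\op{Aut}_\Phi Q$, which I would extract from hypothesis (4). Set $N=\ker\Phi$ and $Q'=Q/N$. Every $\beta\in\op{Aut}_\Phi Q$ satisfies $\Phi\beta=\Phi$, so it fixes $N$ setwise and induces the identity on $Q'$; restriction then gives a homomorphism $r:\op{Aut}_\Phi Q\to\op{Aut}(\ker\Phi)$ whose image is solvable by (4). Writing $\beta(q)=q\,\delta(q)$ with $\delta(q)\in N$, the requirements $\beta\vert N=\id$ and that $\beta$ be a homomorphism force $\delta(q)$ into $z(N)$ and make $\delta$ a crossed homomorphism $Q'\to z(N)$, so that $\ker r\cong Z^1(Q',z(\ker\Phi))$, an abelian group. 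Hence $\op{Aut}_\Phi Q$ is abelian-by-solvable, so solvable, and the positive statement follows; the identification of $\ker r$ is the one place where a genuine (though short) computation is needed.

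For the first sharpness claim — each hypothesis, dropped alone, permitting $\op{Aut}G$ nonsolvable — I would exhibit: $G=H=A_5$, $Q=1$, where only (1) fails and $\op{Aut}G=S_5$; $G=(\Z/2)^4\supset H=(\Z/2)^2$ with $Q=(\Z/2)^2$ and trivial $\Phi$, where only (2) fails and $\op{Aut}G=GL_4(\F_2)$; $G=H=(\Z/2)^3$, $Q=1$, where only (3) fails and $\op{Aut}G=GL_3(\F_2)$; and $G=\Z/3\times(\Z/2)^3$ with $H=\Z/3$, $Q=(\Z/2)^3$ and trivial $\Phi$, where only (4) fails and $\op{Aut}G=\Z/2\times GL_3(\F_2)$. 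In each case the four hypotheses are checked by inspection of $\op{Out}H$ and $\op{Aut}(\ker\Phi)$, nonsolvability coming from a $GL_3(\F_2)$ or $GL_4(\F_2)$ factor.

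For the converse claim — one hypothesis failing while $\op{Aut}G$ stays solvable, for each of (2), (3), (4) — I would take $G=(\Z/2)^2\supset H=\Z/2$ (only (2) fails, $\op{Aut}G=S_3$); $G=\Z/8\times(\Z/2)^2$ with $H$ its characteristic $2$-torsion $(\Z/2)^3$, $Q=\Z/4$, $\Phi$ trivial, so that $N_{\op{Out}H}(\Phi Q)=GL_3(\F_2)$ is nonsolvable yet $\op{Aut}G$ is solvable because the unequal summand orders cut the $GL_3(\F_2)$ down to $GL_2(\F_2)=S_3$ (only (3) fails); and, for (4), $G=(S\oplus S')\rtimes\Z/7$ where $S,S'$ are the two inequivalent $3$-dimensional irreducible $\F_2[\Z/7]$-modules, with $H=S$ and $\Phi$ the Singer embedding $\Z/7\hookrightarrow GL_3(\F_2)=\op{Out}H$: here $\ker\Phi=S'\cong(\Z/2)^3$ has nonsolvable automorphism group, but $N_{\op{Out}H}(\Phi Q)=N_{GL_3(\F_2)}(\Z/7)$ is solvable and, because $S\not\cong S'$, the $\Z/7$-equivariant automorphisms of $S\oplus S'$ reduce to $\Z/7\times\Z/7$, making $H$ characteristic and $\op{Aut}G$ solvable. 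No such example exists with (1) the sole failure: if $H$ is nonsolvable and characteristic then $G$, and hence $\op{Inn}G\cong G/zG$, is nonsolvable, so $\op{Aut}G\supseteq\op{Inn}G$ is nonsolvable — which is why the claim omits the first hypothesis. I expect the genuine obstacle to be precisely the (3) and (4) converse examples, where one must show that a nonsolvable ambient group ($N_{\op{Out}H}(\Phi Q)$ or $\op{Aut}(\ker\Phi)$) contributes only a solvable isotropy $\op{Iso}_{\mathcal S}\be$ to $\op{Aut}G$; this reduces, via Theorem \ref{cycle} and Proposition \ref{presolv}, to a stabilizer computation in $GL_3(\F_2)$ and to the centralizer of a Singer cycle.
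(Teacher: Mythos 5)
Your positive direction is essentially the paper's own argument: hypothesis (2) gives $\op{Aut}(G,H)=\op{Aut}G$, Theorem \ref{cycle} reduces solvability to that of $\op{Iso}_{\mathcal S}\be\sub\mathcal S$, Proposition \ref{presolv} reduces that to solvability of $\op{Aut}_\Phi Q$, and your analysis of $K=\ker\{\op{Aut}_\Phi Q\to\op{Aut}(\ker\Phi)\}$ is the paper's computation with a small sharpening: the paper only exhibits an exact sequence $z\ker\Phi\to K\to H^1(Q',z\ker\Phi)$ (so $K$ metabelian), while your identification of $K$ with a subgroup of $Z^1(Q',z\ker\Phi)$ shows $K$ is abelian; either suffices, and your map $\beta\mapsto\delta$ is indeed an injective homomorphism since $\beta\vert_{\ker\Phi}=\id$ forces $\delta_{\beta\beta'}=\delta_\beta+\delta_{\beta'}$ with values in the abelian group $z\ker\Phi$. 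Your four nonsolvability examples and your converse examples for conditions (2) and (3) are correct; in fact $\Z/8\times(\Z/2)^2$ for (3) is simpler than the paper's small-group $(25,2)$ construction, since $\op{Aut}(\Z/8\times(\Z/2)^2)$ is a $2$-group extended by $GL_1(\F_2)\times GL_2(\F_2)$. Your observation that no converse example can exist for (1) also matches the paper's remark.

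The genuine gap is your converse example for condition (4): in $G=(S\oplus S')\cdot\Z/7$ (semidirect product) the subgroup $H=S$ is \emph{not} characteristic. Inequivalence of $S$ and $S'$ as $\F_2[\Z/7]$-modules only rules out \emph{equivariant} swaps; an automorphism of $G$ may twist the $\Z/7$, and $S'$ is exactly the twist of $S$ by $t\mapsto t^{-1}$ (the eigenvalue sets $\{\zeta,\zeta^2,\zeta^4\}$ and $\{\zeta^3,\zeta^5,\zeta^6\}$ are inverse to each other). Concretely, identify $S\oplus S'\cong\F_8\oplus\F_8$ with a generator $t$ acting by $(x,y)\mapsto(\omega x,\omega^{-1}y)$, $\omega$ a generator of $\F_8^\times$; then $g(x,y)=(y,x)$ satisfies $g(t\cdot v)=t^{-1}\cdot g(v)$, and $\gamma(v\,t^j)=g(v)\,t^{-j}$ is an automorphism of $G$ with $\gamma(S)=S'$. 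So conditions (2) \emph{and} (4) both fail in your example, violating the requirement that exactly one of the remaining conditions be false. The paper's construction avoids this: it takes $Q$ the small group $(24,13)$ with Sylow $2$-subgroup $P=(\Z/2)^3$, $H=\Z/7$, and $Y$ the pullback of $\Z/7\to M\to\Z/3$ ($M$ the metacyclic group of order $21$) along $Q\to Q/P$; there $\ker\Phi=P$ has nonsolvable automorphism group while $H$ is characteristic as the Sylow $7$-subgroup, and --- the step your closing paragraph does not anticipate --- solvability of $\op{Aut}Y$ is established not by a stabilizer computation in $\mathcal S$ but by re-reading $Y$ as the extension $P\to Y\to M$, which satisfies all four hypotheses, and applying the theorem itself to that second extension.
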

\begin{proof} From \ref{cycle} we see that $\op{Aut}(G,H)$ is solvable if and only if $\op{Iso}_{\mathcal S}\be\sub\mathcal S$ is solvable. Since $H$ is characteristic in $G$, $\op{Aut}(G,H)=\op{Aut}G$.  To show $\mathcal S$ is solvable we need to show that $\op{Aut}(\ker\Phi)$ being solvable implies $\op{Aut}_\Phi Q$ is solvable.

If $\beta\in\op{Aut}_\Phi Q$ then $\Phi\,\beta=\Phi$ and $\beta$ restricts to an automorphism of $\ker\Phi$ and so we have a homomorphism $\op{Aut}_\Phi Q\to\op{Aut}(\ker\Phi)$ with kernel $K$.  Again by the hypothesis, $\op{Aut}_\Phi Q$ is solvable if and only if $K$ is.

$\beta\in K$ if and only if $\Phi\beta=\Phi$ and $\beta\vert_{\ker\Phi}=\text{Id}$.  If $\lambda(q)=\beta(q)q^{-1}$ then $\Phi(\lambda q)= 1$ and so $\lambda:Q\to\ker\Phi$.  $\beta(q)=q$ for $q\in\ker\Phi$ and so $\lambda\vert{\ker\Phi}=1$.  $\beta$ a homomorphism if and only if $\lambda$ is a $1$-cocyle, that is $\lambda(qq')=\lambda(q){}^q\lambda(q')$ for all $q,q'\in Q$ where $Q$ is acting by conjugation.  If $q\in Q$ and $q'\in\ker\Phi$ then $\lambda(qq')=\lambda(q)$ and so $\lambda$ is constant on left cosets and hence on right cosets since $\ker\Phi$ is normal in $Q$.  Therefore for $q\in Q$ and $q'\in\ker\Phi$
$$ \lambda(q)=\lambda(q'q)={}^{q'}\lambda(q)=q'\lambda(q)q'^{-1}. $$
Therefore $\lambda(q)\in z\ker\Phi$ and defines a cohomology class $[\lambda]\in H^1(Q,z\ker\Phi)$. Let $\sigma(\beta)=[\lambda]$.  It is easily seen $\sigma:K\to H^1(Q',z\ker\Phi)$ is a homomorphism. If $\sigma(\beta)=[0]$ then $\lambda(q)=w{}^qw^{-1}$ for some $w\in z\ker\Phi$. But then $\beta(q)=\lambda(q)q=wqw^{-1}q^{-1}q=c_w(q)$.  Hence we have an exact sequence $z\ker\Phi\to K\to H^1(Q',z\ker\Phi)$ and $K$ is solvable.

% and from \ref{basic} that $\op{Out}(G,H)$ is solvable if and only if $\op{Iso}_{\bar{\mathcal S}}\be$ is solvable.  If $\op{Aut}(G,H)$ is solvable then $G$ must be solvable since $G/zG\simeq\op{Inn}G\sub\op{Aut}G$ is solvable.  Of course if $G$ is solvable then $\op{Aut}(G,H)$ is solvable if and only if $\op{Out}(G,H)$ is.

%If $\mathcal S$ is solvable then for all extensions $\be\in\mathcal X(Q,H)$, $\op{Iso}_\mathcal S\be$ will be solvable and so $\op{Aut}(G,H)$ will be also.

%From \ref{solv} we have $K$ is solvable and by hypothsis so is $\op{Aut}(\ker\Phi)$ and hence from diagram \ref{square} so is $\op{Aut}_\Phi(Q)$.  From \ref{SS} and the hypothesis on the normalizer of $\Phi(Q)$ we have that $\hat{\mathcal S}$ is solvable. It Now follows from \ref{basic} that $\op{Aut}G$ is solvable. 

\par Let $p$ and $n!$ be relatively prime and $G=H\times Q=A_5\times\Z/p$. Then 1) is false, 2), 3) and 4) are true and $\op{Aut}G\simeq\op{Aut}A_5\times \Z/(p-1)$ is not solvable.
\par If $G=(\Z/2)^3=H\times Q=(\Z/2)^2\times\Z/2$ then 2) is false 1), 3), and 4) are true and $\op{Aut}(G)\simeq \op{GL}_3(\bold F_2)$ is not solvable.
\par For 3) let $H=(\Z/2)^3$, $Q=\Z/3$ and $G=H\times Q$. Then 3) is false, 1) 2) and 4) are true and $\op{Aut}G$ is non-solvable.
\par For 4) just interchange the roles of $H$ and $Q$ in example $3$. 

\par As for the necessity of the above conditons. If $\op{Aut}G$ is solvable then $G$ and therefore $H$ must be solvable. Certainly being characteristic is not a necessary condition as $\Z/2\times\Z/2$ has a solvable automorphism group but has no non-trivial characteristic subgroups.
\par Let $H$ be the small group $(25,2)$.  This is a solvable group whose automorphism group $A$ (small group $(480,218)$) is not solvable.  $A$ contains a normal subgroup $H$ of order $2$.  Let $G$ be the semi-direct product of $H$ and $Q=\Z/2$, where $\Z/2$ acts via $H$ (small group $(50,3)$).  Then $\op{Aut}G$ (small group $(80,30)$) is solvable and $\ker\Phi=(1)$.  Clearly $H$ is characteristic in $G$ since it is the Sylow $5$-subgroup.  Hence $H\to G\to\Q$ satisfies all the conditions of the theorem except for $3$ and has $\op{Aut}G$ solvable.  Hence condition $3$ is not necessary.    
\par Let $Q$ be the small group $(24,13)$.  $Q$ has Sylow $2$-subgroup $P=(\Z/2)^3$ with non-solvable automorphism group $GL_3(\F_2)$.  $Q$ is the split extension of $(\Z/2)^3$ by any element of order $3$ in $\op{Aut}P$. Let $H=\Z/7$ and $\Phi':G/P=\Z/3\to \op{Aut} H$ be any monomorphism.  Let $H$ be the metacylic group of order $21$ and $\Z/7\to Y\to Q$ the pullback of the extension $\Z/7\to H\to\Z/3$ by the natural map $\pi:Q\to Q/P$.  Then this extension has $\Phi=\Phi'\pi$ with kernel $\Phi=P$ with non-solvable automorphism group.  $\Z/7$ is solvable and since $\op{Aut}\Z/7$ is abelian, $\Phi(Q)=\Phi'(\Z/3)$ has solvable normalizer.  Also $\Z/7$ is characteristic in $Y$ since it is the Sylow $7$-subgroup.  Therefore this extension satisfies conditions $1$, $2$ and $3$ of the theorem but not $4$.  We may also consider $Y$ as an extension of $P$ with quotient $H$.  If $\Psi:H\to\op{Out}P=\op{Aut}P$ is the homomorphism associated to this extension, then $H_{\op{Out}P}(\Psi H)$ is of order $6$ and solvable.  $\op{Aut}H$ is a solvable group of order $42$.  Therefore the extension $P\to Y\to H$ satisfies all the conditions of the theorem and $\op{Aut}Y$ is solvable.  Therefore condition $4$ of the theorem is not necessary.\end{proof}

\begin{remark} (1) Since $\op{Aut}_\Phi Q\sub\op{Aut}Q$ we could replace conditon 4) of the above theorem by the condition $\op{Aut}Q$ be solvable.
It is not at all clear if either one of these conditions implies the other.
\par (2) With slightly more work one can show there is an exact sequence
$$ 0\to H^0(Q,A)\rar{res} H^0(\ker\Phi,A)\to K\rar{\sigma}H^1(Q,A)\rar{res} H^1(\ker\Phi,A).  $$ with $A=z\ker\Phi$.\end{remark}

\section{Herbrand quotient}

In order to obtain the exact sequence in \ref{basic} one starts with the exact sequence in \ref{cycle} and then divides by the sequence
$$  1\to \mathcal L\to \op{Inn}G\rar{res}\mathcal B\to 1  $$
where $\mathcal L\simeq (C_GH\cap\pi^{-1}zQ)/zG$.  Instead consider the exact sequence and map of exact sequences
\xymatrix{   1 \ar[r] & \mathcal K \ar[r]\ar[d]^a & \op{Aut}_HG\ar[r]\ar[d]^b &\op{Inn}H\ar[r]\ar[d]^c & 1 \\
1 \ar[r] &\mathcal L \ar[r] & \op{Inn}G\ar[r] &\mathcal B\ar[r]& 1}

\noindent where $\mathcal K\simeq zH/(H\cap zG)\simeq zHzG/zG$.  All groups are normal subgroups and the vertical maps are monomorphisms.
Dividing the exact sequence in \ref{cycle} by these two sequences gives the following.

\xymatrix{  0\ar[r]&\text{coker}\, a\ar[r]\ar[d]&\text{coker}\,b\ar[r]\ar[d]&\text{coker}\,c\ar[r]\ar[d] &1 \\
 0\ar[r] &H^1(Q,zH) \ar[r]\ar[d]^a & \op{Out}'(G,H)\ar[r]\ar[d]^b &\op{Iso}_{\hat{\mathcal S}}\be\ar[r]\ar[d]^c & 1 \\
0\ar[r] &\bar H^1(Q,zH) \ar[r] & \op{Out}(G,H)\ar[r] &\op{Iso}_{\bar{\mathcal S}}\be\ar[r]& 1 }
\noindent where $\hat{\mathcal S}=\mathcal S/\op{Inn}H\sub\op{Out}H\times\op{Aut}Q$ and $\op{Out}'(G,H)=\op{Aut}(G,H)/\op{Aut}_HG$.

\begin{thm} Suppose $\be:1\to H\to G\to Q\to 1$ is an extension with $G$ is a finite group.  Then
$$  \vert\op{Aut}(G,H)\vert=\frac{\vert H^1(Q,zH)\vert}{\vert H^0(Q,zH)\vert\,\vert \mathcal O_{\hat{\mathcal S}}\bold E\vert}\,\vert H\vert\,\vert\hat{\mathcal S}\vert$$
where $\mathcal O_{\hat{\mathcal S}}\bold E$ is the orbit of $\bold E$ in $\mathcal{X}_\Phi(Q,H)$.  Moreover $\vert\mathcal O_{\hat{\mathcal S}}\bold E\vert\leq \vert H^2(Q,zH)\vert$.
\end{thm}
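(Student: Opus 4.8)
The plan is to count $|\op{Aut}(G,H)|$ by traversing the two layers of the commutative diagram preceding the statement and then feeding in the orbit-stabilizer theorem for the action of $\hat{\mathcal S}$ on $\mathcal X_\Phi(Q,H)$. Since $G$ is finite every group in sight is finite, so index counting is legitimate. First I would read off the middle row $0\to H^1(Q,zH)\to\op{Out}'(G,H)\to\op{Iso}_{\hat{\mathcal S}}\be\to 1$, which gives $|\op{Out}'(G,H)|=|H^1(Q,zH)|\,|\op{Iso}_{\hat{\mathcal S}}\be|$. Because $\op{Out}'(G,H)=\op{Aut}(G,H)/\op{Aut}_HG$ with $\op{Aut}_HG$ normal, this yields
$$|\op{Aut}(G,H)|=|\op{Aut}_HG|\,|H^1(Q,zH)|\,|\op{Iso}_{\hat{\mathcal S}}\be|.$$
The two remaining inputs are a value for $|\op{Aut}_HG|$ and an orbit-stabilizer identity for $|\op{Iso}_{\hat{\mathcal S}}\be|$.

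For the first I would use the top exact sequence $1\to\mathcal K\to\op{Aut}_HG\to\op{Inn}H\to 1$ together with $\mathcal K\simeq zH/(H\cap zG)$, so that
$$|\op{Aut}_HG|=|\mathcal K|\,|\op{Inn}H|=\frac{|zH|}{|H\cap zG|}\cdot\frac{|H|}{|zH|}=\frac{|H|}{|H\cap zG|}.$$
The crucial identification here is $H^0(Q,zH)=(zH)^Q=zH\cap zG=H\cap zG$: an element of $zH$ is fixed by the $\Phi$-action of $Q$ precisely when it is centralized by every preimage of every $q\in Q$, i.e. precisely when it lies in $zG$. Hence $|\op{Aut}_HG|=|H|/|H^0(Q,zH)|$.

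For the second, I first observe that $\op{Inn}H=\{(c_h\vert H,\id):h\in H\}$ acts trivially on $\mathcal X_\Phi(Q,H)$ by Theorem \ref{conj} (the case $g\in H$), so the right $\mathcal S$-action descends to a genuine $\hat{\mathcal S}$-action, of which $\op{Iso}_{\hat{\mathcal S}}\be$ is precisely the stabilizer of $\be$; orbit-stabilizer then gives $|\op{Iso}_{\hat{\mathcal S}}\be|=|\hat{\mathcal S}|/|\mathcal O_{\hat{\mathcal S}}\be|$. Substituting the two inputs into the displayed product and rearranging produces exactly
$$|\op{Aut}(G,H)|=\frac{|H^1(Q,zH)|}{|H^0(Q,zH)|\,|\mathcal O_{\hat{\mathcal S}}\be|}\,|H|\,|\hat{\mathcal S}|.$$

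Finally, for the inequality I would invoke that $H^2(Q,zH)$ acts simply transitively on $\mathcal X_\Phi(Q,H)$: the difference map $\be'\mapsto(\be-\be')$ of Proposition \ref{diff} is then a bijection $\mathcal X_\Phi(Q,H)\to H^2(Q,zH)$, so its restriction embeds the orbit $\mathcal O_{\hat{\mathcal S}}\be$ into $H^2(Q,zH)$, whence $|\mathcal O_{\hat{\mathcal S}}\be|\le|H^2(Q,zH)|$. I expect the principal obstacle to be bookkeeping rather than idea: one must keep the subgroups $\op{Inn}H$ and $\mathcal B$ of $\mathcal S$ consistently identified so that the $\hat{\mathcal S}=\mathcal S/\op{Inn}H$ used in the orbit-stabilizer step is the same $\hat{\mathcal S}$ appearing in the formula, and the single computational point that must be gotten right is the identification $H^0(Q,zH)=H\cap zG$, since it is what converts the three index counts into the stated closed form.
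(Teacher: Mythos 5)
Your proposal is correct and follows essentially the same route as the paper: the middle row of the final diagram giving $\vert\op{Aut}(G,H)\vert=\vert H^1(Q,zH)\vert\,\vert\op{Iso}_{\hat{\mathcal S}}\bold E\vert\,\vert\op{Aut}_HG\vert$, the identification $\vert\op{Aut}_HG\vert=\vert H\vert/\vert H\cap zG\vert$ with $H^0(Q,zH)=H\cap zG$, orbit--stabilizer for the $\hat{\mathcal S}$-action, and the simply transitive $H^2(Q,zH)$-action for the inequality. Your added justifications (deriving $\vert\op{Aut}_HG\vert$ from the top row $1\to\mathcal K\to\op{Aut}_HG\to\op{Inn}H\to 1$, and invoking Theorem \ref{conj} with $g\in H$ to see that $\op{Inn}H$ acts trivially so the $\hat{\mathcal S}$-action and its stabilizer are well defined) merely make explicit steps the paper leaves implicit.
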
 
\begin{proof} The above diagram shows $\vert\op{Out}(G,H)\vert\vert\text{coker}\,b\vert=\vert H^1(Q,zH)\vert\vert\op{Iso}_{\hat{\mathcal S}}\be\vert$.  Since $\text{coker}\,b\simeq \op{Inn}G/\op[Aut_HG$ and $\op{Out}(G,H)\simeq\op{Aut}(G,H)/\op{Inn}G$, we have
$$ \vert\op{Aut}(G,H)\vert=\vert H^1(Q,zH)\vert\vert\op{Iso}_{\hat{\mathcal S}}\be\vert\vert\op{Aut}_HG\vert.$$
Since $\op{Aut}_HG\simeq H/zG\cap H$, $H^0(Q,zH)=zG\cap H$ and $\vert\mathcal O_{\hat{\mathcal S}}\bold E\vert\vert\op{Iso}_{\hat{\mathcal S}}\be\vert=\vert\hat{\mathcal S}\vert$ the result follows.  Since $\mathcal X_\Phi(Q,H)$ and $H^2(Q,zH)$ are bijectively equivalent it follows $\vert\mathcal O_{\hat{\mathcal S}}\bold E\vert\leq \vert H^2(Q,zH)\vert$.
\end{proof}

The exact sequence $0\to H^1(Q,zH)\to \op{Out}'(G,H)\to \op{Iso}_{\hat{\mathcal S}}\be\to 1 $ and the fact that $\op{Out}'(G,H)=\op{Aut}(G,H)/\op{Aut}_HG$ gives the following.

\begin{thm} Suppose $\be:1\to H\to G\to Q\to 1$ is an extension.  Then there exists a normal series $\op{Aut}G=A_0\triangleright A_1\triangleright A_2\triangleright A_3=(1)$ with 
$$A_2/A_3\simeq H/H^0(Q,zH)\qquad A_1/A_2\simeq H^1(Q,zH)\qquad A_0/A_1\simeq\op{Iso}_{\hat{\mathcal S}}\bold E. $$\end{thm}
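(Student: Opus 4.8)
The plan is to realize the claimed chain as the preimage, under the projection $\op{Aut}(G,H)\to\op{Out}'(G,H)=\op{Aut}(G,H)/\op{Aut}_HG$, of the already-established short exact sequence
$$0\to H^1(Q,zH)\rar{\mu}\op{Out}'(G,H)\rar{res}\op{Iso}_{\hat{\mathcal S}}\be\to 1.$$
When $H$ is characteristic in $G$ one has $\op{Aut}(G,H)=\op{Aut}G$, so I read $A_0$ as $\op{Aut}(G,H)$ throughout; matching $A_0$ with $\op{Aut}G$ is legitimate exactly in that case.

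First I would set $A_3=(1)$ and $A_2=\op{Aut}_HG=\{c_h\vv h\in H\}$, the subgroup of inner automorphisms coming from $H$. For $\gamma\in\op{Aut}(G,H)$ and $h\in H$ one has $\gamma c_h\gamma^{-1}=c_{\gamma(h)}$ with $\gamma(h)\in H$, so $A_2$ is normal in $A_0$. Since $c_h=\id$ precisely when $h\in zG$, evaluation $h\mapsto c_h$ gives $A_2\simeq H/(H\cap zG)$; and because an element of $zH$ is $Q$-fixed under $\Phi$ exactly when it is central in $G$, we have $H\cap zG=H^0(Q,zH)$. Hence $A_2/A_3\simeq H/H^0(Q,zH)$, the bottom quotient.

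Next I would define $A_1$ to be the full preimage in $A_0$ of the normal subgroup $\mu(H^1(Q,zH))\sub\op{Out}'(G,H)$ under the quotient map $p:A_0\to A_0/A_2=\op{Out}'(G,H)$. Then $A_1$ is normal in $A_0$, being the preimage of a normal subgroup, and it contains $A_2=\ker p$, so the chain $A_0\triangleright A_1\triangleright A_2\triangleright A_3$ is a genuine normal series. By construction $p(A_1)=\mu(H^1(Q,zH))$, and since $\mu$ is injective this yields $A_1/A_2\simeq H^1(Q,zH)$, the middle quotient.

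Finally the third isomorphism theorem gives
$$A_0/A_1\simeq(A_0/A_2)/(A_1/A_2)=\op{Out}'(G,H)/\mu(H^1(Q,zH))\simeq\op{Iso}_{\hat{\mathcal S}}\be,$$
the last isomorphism being exactness of the displayed sequence at $\op{Iso}_{\hat{\mathcal S}}\be$ together with injectivity of $\mu$, which completes the series. There is no serious obstacle: the content is carried entirely by the previously established exact sequence, and the only verifications needed are the normality of $A_2$ in $\op{Aut}(G,H)$ and the two identifications $\op{Aut}_HG\simeq H/(H\cap zG)$ and $H\cap zG=H^0(Q,zH)$, both routine. The one point demanding care is the replacement of $\op{Aut}(G,H)$ by $\op{Aut}G$ in the top group, which requires $H$ to be characteristic in $G$.
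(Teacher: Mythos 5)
Your proof is correct and essentially the same as the paper's: the paper simply sets $A_2=\ker\{\op{Aut}(G,H)\to\op{Out}'(G,H)\}=\op{Aut}_HG$ and $A_1=\ker\{\op{Aut}(G,H)\to\op{Out}'(G,H)\to\op{Iso}_{\hat{\mathcal S}}\be\}$, and by exactness of $0\to H^1(Q,zH)\to\op{Out}'(G,H)\to\op{Iso}_{\hat{\mathcal S}}\be\to 1$ this $A_1$ coincides with your preimage of $\mu(H^1(Q,zH))$, with the quotients identified exactly as you do (including $\op{Aut}_HG\simeq H/(H\cap zG)$ and $H\cap zG=H^0(Q,zH)$). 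Your closing caveat is also well taken: the series naturally lives in $\op{Aut}(G,H)$, so the theorem's top term $\op{Aut}G$ is accurate only when $H$ is characteristic in $G$, a point the paper's statement glosses over.
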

\begin{proof} $A_1=\ker\{\op{Aut}(G,H)\to\op{Out}'(G,H)\to\op{Iso}_{\hat{\mathcal S}}\be\}$ and $A_2=\ker\{\op{Aut}(G,H)\to\op{Out}'(G,H)\}=\op{Aut}_HG$.\end{proof}

\end{document}